\documentclass[11pt]{article}

\usepackage{amsmath,amssymb,mathtools,bm}
\usepackage{amsthm}
\usepackage{graphicx,booktabs,float,longtable,array,multirow}
\usepackage{algorithm}
\usepackage{algpseudocode}
\usepackage{enumitem}
\usepackage{xcolor}
\usepackage[numbers,sort&compress]{natbib}
\usepackage[colorlinks=true,linkcolor=blue,citecolor=blue,urlcolor=blue]{hyperref}
\usepackage[margin=1in]{geometry}
\graphicspath{{figures/}}

\theoremstyle{plain}
\newtheorem{theorem}{Theorem}
\newtheorem{lemma}{Lemma}
\newtheorem{proposition}{Proposition}
\theoremstyle{remark}
\newtheorem{remark}{Remark}
\newcommand{\R}{\mathbb{R}}
\newcommand{\dd}{\,\mathrm d}
\newcommand{\norm}[1]{\left\lVert #1\right\rVert}

\newcommand{\eps}{\varepsilon}

\newcommand{\spanop}{\operatorname{span}}
\newcommand{\diag}{\operatorname{diag}}

\newcommand{\vc}{\boldsymbol{c}}
\newcommand{\vd}{\boldsymbol{d}}
\newcommand{\vb}{\boldsymbol{b}}
\newcommand{\vphi}{\boldsymbol{\varphi}}
\newcommand{\vgamma}{\boldsymbol{\gamma}}
\newcommand{\bx}{\boldsymbol{x}}

\newcommand{\ba}{\boldsymbol{a}}

\newcommand{\valpha}{{\boldsymbol{\alpha}}}
\newcommand{\vpsi}{\boldsymbol{\psi}}

\newcommand{\vxi}{\boldsymbol{\xi}}

\newcommand{\tableformat}{%
  \small
  \setlength{\tabcolsep}{4.5pt}%
  \renewcommand{\arraystretch}{1.08}%
}

\newcommand{\meanstd}[2]{\begin{tabular}[c]{@{}c@{}}$#1$\\[-1pt]$(#2)$\end{tabular}}

\begin{document}
\emergencystretch=3em

\title{Evo-GTransNet for Parabolic PDEs: A Fixed-Feature Galerkin Method of Lines with Quadrature-Mass Orthonormalization}
\author{Lili Ju\thanks{Department of Mathematics, University of South Carolina,
Columbia, SC 29208, USA. Email: \href{mailto:ju@math.sc.edu}{ju@math.sc.edu}.}
\and
Jin Zhao\thanks{Corresponding author. Academy for Multidisciplinary Studies,
Capital Normal University, Beijing 100048, China. Email:
\href{mailto:zjin@cnu.edu.cn}{zjin@cnu.edu.cn}.}}

\date{}
\maketitle

\begin{abstract}
In this paper, we develop an evolutionary generalized transferable neural network (Evo-GTransNet) solver for parabolic partial differential equations, formulated as a retained-space fixed-feature Galerkin method of lines. A GTransNet provides the prescribed spatial dictionary, while only the retained output coefficients evolve, thereby avoiding nonlinear training during time integration. To address severe mass-matrix ill-conditioning, we apply a quadrature-weighted truncated singular value decomposition (SVD) to select the numerically resolved trial space, followed by a separate rescaling that makes its basis orthonormal with respect to the assembly-quadrature mass inner product. Rank truncation modifies the approximation space, whereas the subsequent orthonormalization changes only its coordinate representation and preserves the retained discrete functions in exact arithmetic.
The resulting semidiscrete coefficient system has an identity mass matrix, and we establish a semidiscrete energy law for symmetric linear parabolic problems. With the implicit midpoint scheme for time discretization, we further prove the contractivity of the method and derive a conditional fully discrete error estimate in which the error is controlled by the retained-space approximation error and the consistency defects. Numerical experiments demonstrate second-order temporal convergence using repeated feature samples together with separate assembly and validation quadratures, and quantify the accuracy of the retained space in the presence of severe raw-mass ill-conditioning. For the high-frequency and multiscale benchmark problems considered here, GTransNet achieves the smallest mean validation errors among the tested fixed-feature dictionaries at the same nominal output dimension.
\end{abstract}

\noindent\textbf{Keywords:} Generalized TransNet; fixed-feature Galerkin
method; parabolic PDEs; method of lines; weighted TSVD; quadrature-mass
orthonormalization.

\medskip
\noindent\textbf{Mathematics Subject Classification (2020):}
65M60, 65M12, 65M20, 68T07.

\section{Introduction}

Neural-network approximations provide flexible, meshfree trial spaces for numerical solution of partial differential equations (PDEs).  Beginning with early neural trial-function methods \cite{lagaris1998ann}, the field has developed through the deep Ritz method \cite{e2018deepritz}, the deep Galerkin method \cite{sirignano2018dgm}, physics-informed neural networks \cite{raissi2019pinn,karniadakis2021piml}, and related variational and collocation formulations.  Variational PINN formulations enforce tested residual moments, and hp-VPINNs \cite{kharazmi2021hpvpinn} further combine this weak-form viewpoint with domain decomposition and local $h/p$ refinement of test spaces.  Most of these approaches determine both hidden-layer and output-layer parameters by nonlinear optimization.  Such training can be expensive and optimization-sensitive, and composite physics-informed losses can additionally produce unbalanced back-propagated gradients and stiff training dynamics \cite{wang2021gradient}.  It is also known that neural approximations may favor low-frequency components \cite{xu2020fprinciple,rahaman2019spectralbias} over high-frequency ones of the solutions.

Prescribed Fourier features can modify this spectral behavior \cite{tancik2020fourier}. More generally, fixed-feature methods take a complementary approach by first constructing a trial dictionary and then solving only for the output coefficients. Random feature method (RFM) and extreme learning machine (ELM) are representative examples of this strategy \cite{rahimi2007random,huang2006elm,chen2022rfm,dong2021localelm}.
Transferable neural network (TransNet) \cite{zhang2024transnet} is a single-hidden-layer shallow neural network that replaces a fully random hidden layer with features associated with geometry-informed partition hyperplanes. Generalized TransNet (GTransNet) \cite{cheng2026gtransnet} retains this geometric construction in the first hidden layer and further enriches it by centrally symmetric bias sampling and variance-controlled additional hidden layers for steady-state PDEs. In both methods, the hidden-layer parameters are fixed after sampling, while the output coefficients are determined by solving a linear least-squares problem. This separation is attractive for high-frequency and multiscale approximation, but its extension from steady-state problems to initial-value problems is not straightforward. A time-dependent solver must propagate the initial data causally, enforce boundary conditions throughout the evolution, and remain numerically stable when the prescribed features contain nearly linearly dependent directions.

Existing neural solvers treat time-dependent problems in different ways. Global space--time methods approximate the spatial and temporal variables simultaneously and can be effective when combined with suitable partitions and weighting strategies \cite{chen2023rfmtime}. Evolutional deep neural networks and Neural Galerkin methods evolve neural representations sequentially in time \cite{du2021ednn,bruna2023ng}. A recent discrete-time random-feature method combines fixed spatial features with third-order IMEX--Runge--Kutta time stepping and stage-wise least-squares solves \cite{zhou2026discretetime}.
A related fixed reduced-space framework appears in POD--Galerkin methods for parabolic equations. However, POD constructs the reduced basis from solution snapshots, whereas the retained space considered here is generated from prescribed GTransNet features \cite{kunisch2001pod}.
Recent work has also combined feature spaces with Galerkin projection, explicit orthogonality control, and rank-revealing linear algebra \cite{tang2026dnngalerkin,jia2026orthogonal,tan2026preconditioned,vanbeek2026filtering}.  These studies indicate that feature construction, temporal discretization, and numerical-rank control are largely independent design choices.

In this paper, we adopt a fixed spatial-subspace viewpoint similar to that in the classical method-of-lines and reduced-basis formulations \cite{thomee2006galerkin,hesthaven2022reduced}, where the spatial dictionary is fixed after construction and only the output coefficients evolve in time. For a prescribed GTransNet dictionary, we refer to this retained-space realization as \emph{Evo-GTransNet}. It is a fixed-feature Galerkin method of lines for parabolic equations: a hard boundary factor, together with an optional boundary lifting, defines a boundary-conforming trial space, while only the retained output coefficients evolve.

Since the sampled neural features can be highly correlated, the quadrature mass matrix may be severely ill-conditioned. We therefore apply a rank-revealing truncated singular value decomposition (SVD) before time integration. This procedure removes numerically unresolved directions and constructs a basis that is orthonormal with respect to the assembly-quadrature mass matrix, so that the retained coefficient system has an identity mass matrix.
Galerkin projection, rank-revealing factorizations, mass-matrix orthonormalization, and implicit midpoint
time stepping are standard techniques in numerical analysis \cite{thomee2006galerkin,hesthaven2022reduced}. Another major novelty of the present work lies in their quadrature-consistent retained-space realization for redundant prescribed neural dictionaries, with GTransNet serving as the primary example. The proposed construction employs a retained space throughout the entire time interval, separates subspace truncation from coordinate orthonormalization, and distinguishes the assembly and validation inner products. Once the retained space has been constructed, the stability analysis depends only on the associated Galerkin matrices and not on the procedure used to generate the dictionary.


The rest of the paper is organized as follows. Section \ref{sec:background} reviews fixed-feature solvers, related methods, and the GTransNet dictionary. Sections \ref{sec:method}--\ref{sec:analysis} present the retained-space formulation, implementation algorithms, and the corresponding stability and error analysis, respectively. Section \ref{sec:numerics} reports numerical results that assess temporal convergence, the conditioning of the raw mass matrix, and the accuracy of the trial space. Finally, Section \ref{sec:conclusion} summarizes the main findings, discusses the limitations, and concludes the paper. Technical derivations and implementation details are provided in the Appendices.

\section{Background and motivations}\label{sec:background}

\subsection{Fixed-feature PDE solvers and rank control}

A neural feature or subspace solver approximates a scalar function $u$ by
\begin{equation}\label{appx}
  u_N(\bx)=\sum_{j=1}^N c_j \psi_j(\bx)=\vpsi(\bx)^\top\vc, \quad \bx\in\Omega,
\end{equation}
where $\Omega$ is the target spatial domain,  $\vpsi(\bx)=(\psi_1(\bx),\ldots,\psi_N(\bx))^\top$ is the prescribed final feature vector and $\vc=(c_1,\ldots,c_N)^\top$ contains the output coefficients obtained from a linear or linearized residual system.  ELMs freeze randomly sampled hidden parameters and determine only the output coefficients \cite{huang2006elm}; PIELM applies this output-only principle to stationary and time-dependent linear PDEs \cite{dwivedi2020pielm}.  Kernel random features originated as scalable approximations for kernel machines \cite{rahimi2007random}, whereas PDE-oriented output-only methods construct fixed trial spaces for collocation, partition-of-unity, time-dependent, interface, and local-subdomain formulations \cite{chen2022rfm,chen2023rfmtime,chi2024rfm,dong2021localelm}.

Fixed random neural trial functions have also been combined with Petrov--Galerkin test spaces or local discontinuous Galerkin coupling \cite{shang2023rnnpg,sun2024lrnndg}. Their reported methods for time-dependent problems use global space--time formulations rather than advancing a fixed spatial Galerkin system through a method-of-lines approach. Trained neural-subspace methods follow a different strategy: they first learn problem-dependent neural basis functions and then compute an approximate solution in their linear span \cite{xu2025subspace}. TransNet is a geometry-informed shallow neural network and distributes partition hyperplanes approximately uniformly over the computational domain \cite{zhang2024transnet}. Our proposed Evo-GTransNet, in contrast, constructs a prescribed spatial dictionary without problem-specific nonlinear training.
The main advantage of output layer-only feature solvers is that they avoid nonlinear training of the hidden representation. For a linear PDE, the method leads directly to a linear system for the output coefficients. For a nonlinear PDE, a Picard or Newton iteration still updates only the output coefficients. The main challenge is to construct a spatial dictionary that is sufficiently expressive for the target solution class while remaining numerically stable under the required linear algebra operations.

A particularly closely related recent work is the feature-based DNN Galerkin framework of Tang et al.~\cite{tang2026dnngalerkin}. It combines neural trial spaces with Galerkin projection, quadrature-based truncated SVD orthogonalization, a semidiscrete energy law, and energy-stable time integrators for gradient flows. The framework accommodates static random or structured bases, offline-pretrained frozen bases, and time-local bases reconstructed at prescribed restart times. While it supports both fixed and adaptive trial spaces, the present method applies a hard-boundary construction to a prescribed GTransNet dictionary, selects a quadrature-resolved retained subspace once before time integration, and keeps that subspace fixed over the entire time interval. Within this setting, we distinguish subspace truncation from coordinate orthonormalization and develop the corresponding analysis for fixed-feature space parabolic PDE solvers.
Zhou et al.~\cite{zhou2026discretetime} also use a fixed random-feature trial space, but compute the solution at each stage by solving a collocation least-squares problem within a four-stage, third-order IMEX--Runge--Kutta scheme. In contrast, our method advances a single weak-form Galerkin coefficient system in coordinates that are orthonormal with respect to the assembly-quadrature mass inner product.

Complementary studies have addressed conditioning and redundancy at both the feature-construction and algebraic-solver levels. Jia and Wang \cite{jia2026orthogonal} introduce orthogonality regularization during physics-informed feature pretraining and study the effective rank, conditioning, and transferability of the resulting features. Tan and Chen \cite{tan2026preconditioned} combine CountSketch compression with QR- or SVD-based right preconditioning for overdetermined random-feature least-squares systems, whereas van Beek et al.~\cite{vanbeek2026filtering} use local block rank-revealing QR factorizations to filter redundant features and precondition domain-decomposed systems. These approaches operate, respectively, during feature learning, in the global least-squares solve, and within local domain-decomposition blocks. In our method, by contrast, numerical-rank selection is defined through the quadrature-mass inner product associated with a boundary-conforming weak-form trial space, and the resulting retained subspace is fixed throughout the time integration.

\subsection{TransNet and GTransNet}

Assume that the domain $\Omega\subset B_R(\bx_c)\subset\mathbb{R}^d$, where $B_R(\bx_c)$ denotes the ball centered at $\bx_c$ with radius $R>0$. The single-hidden-layer features in TransNet are defined by
\begin{equation}
\psi_j^{[1]}(\bx)=\tanh\left(\gamma_j(\ba_j^\top(\bx-\bx_c)+R\xi_j)\right),\quad j=1,2,\cdots,N,
\label{eq:trans_feature}
\end{equation}
where $\ba_j\in\mathbb R^{d}$ is a randomly sampled unit normal and $\xi_j\stackrel{\rm iid}{\sim}\mathcal U[0,1]$ determines the offset of the partition hyperplane relative to the oriented normal, and $\gamma_j>0$ is a shape parameter. The key idea of TransNet is to distribute the partition hyperplanes, i.e., $\{\ba_j^\top(\bx-\bx_c)+R\xi_j=0\}_{j=1}^N$, approximately uniformly over the domain through a dedicated sampling algorithm for $\{\ba_j\}_{j=1}^N$ and $\{\xi_j\}_{j=1}^N$ (see \cite{zhang2024transnet} for details). 
Several recent TransNet variants address complementary spatial challenges. Multi-TransNet assigns distinct TransNet dictionaries to interface subdomains and couples them through interface conditions \cite{lu2025multitransnet}. MAE-TransNet combines separate TransNet approximations of the matched asymptotic inner and outer solutions for singularly perturbed boundary-layer problems \cite{shen2026maetransnet}. Weak TransNet uses TransNet features as trial functions, radial basis functions as test functions, and determines the output coefficients by minimizing the weak residual in a least-squares sense \cite{xu2026weaktransnet}. 
These methods perform very well for problems with smooth or piecewise smooth solutions, but their performance can still deteriorate when large values of $\gamma_j$ drive the activation functions toward saturation.

GTransNet extends this construction by adding additional hidden layers. Let $N_\ell$ denote the width of hidden layer $\ell$, and let $L$ be the total number of hidden layers. For the first hidden layer width $N_1$, it inherits the TransNet construction with slight changes and is defined by
\begin{equation}
  \vpsi^{[1]}(\bx)
  =\tanh\left(
    \mathbf\Gamma\bigl(\mathbf A^{[1]}(\bx-\bx_c)+R\vxi\bigr)
  \right)\in\R^{N_1},
  \label{eq:gtrans_first}
\end{equation}
where the rows of $\mathbf A^{[1]}\in\R^{N_1\times d}$ are independently sampled unit vectors, $\vxi=(\xi_1,\ldots,\xi_{N_1})^\top$ with $\xi_i\stackrel{\rm iid}{\sim}\mathcal U[-1,1]$, ${\vpsi}^{[1]}=(\psi_{1}^{[1]},\cdots,\psi_{N_1}^{[1]})$, $\mathbf\Gamma:=\diag(\vgamma)$, and $\vgamma=(\gamma_1,\ldots,\gamma_{N_1})^\top$.
The symmetry of $\{\xi_i\}_{i=1}^{N_1}$ refers to the sampling distribution (finite samples are not required to occur in symmetric pairs). In all experiments reported in this paper, we simply  take $\mathbf\Gamma=\gamma\mathbf I_{N_1}$, i.e., $\gamma_i\equiv\gamma$ for $i=1,2,\cdots, N_1$. Subsequent bias-free hidden layers are defined recursively by
\begin{equation}
  \vpsi^{[\ell]}(\bx)
  =\tanh\left(\mathbf W^{[\ell]}\vpsi^{[\ell-1]}(\bx)\right)
  \in\R^{N_\ell},
  \qquad 2\le\ell\le L,
  \label{eq:gtrans_second}
\end{equation}
where
\(\mathbf W^{[\ell]}\in\mathbb R^{N_\ell\times N_{\ell-1}}\)
is sampled independently according to
\begin{equation}\label{eq:variance_control}
  (\mathbf W^{[\ell]})_{ij}
  \sim
  \mathcal N\left(0,\frac{\delta}{N_{\ell-1}}\right),
  \qquad 2\le\ell\le L.
\end{equation}
with a prescribed constant $0<\delta\le1$ as prescribed in \cite{cheng2026gtransnet}.  
The activation $\tanh$ is applied componentwise. The variance-controlled hidden layers are intended to keep the activations away from saturation while generating richer multiscale features.
The final GTransNet feature vector and its scalar components are
\begin{equation}
  \vpsi(\bx):=\vpsi^{[L]}(\bx),
  \qquad
  \psi_j(\bx):=\psi_j^{[L]}(\bx),
  \qquad j=1,\ldots,N_L.
\end{equation}
 In the two-hidden-layer architecture used throughout this paper, we have $L=2$ and  $N_2=N$, and  the $N$ components of $\vpsi^{[2]}$ form the final feature set to be used in \eqref{appx}.
Thus $\psi_j=\psi_j^{[1]}$ for TransNet and $\psi_j=\psi_j^{[2]}$ for the GTransNet architecture here.  

Most existing TransNet/GTransNet methods are developed for steady-state elliptic, interface, or singularly perturbed problems as mentioned above and do not address the fixed-feature space evolution of the Galerkin coefficient system for parabolic PDE problems considered in this paper.
For a steady-state problem, a single residual least-squares system is a natural formulation. For an initial-value problem, however, the numerical method must respect causality and stability. Evolutional deep neural networks (EDNNs) \cite{du2021ednn} treat the neural-network parameters as time-dependent variables after fitting the initial condition. Neural Galerkin methods \cite{bruna2023ng} evolve the neural representation through sequential-in-time residual projection and active learning. Randomized sparse Neural Galerkin methods \cite{berman2023sparse} further demonstrate that evolving a large number of parameters can be computationally expensive and may amplify error accumulation over time.
These observations motivate a conservative extension of GTransNet: the sampled hidden features are kept fixed, and only the output coefficients evolve. The resulting formulation retains the prescribed GTransNet feature construction while introducing a time-causal Galerkin evolution.

\section{The Evo-GTransNet method}\label{sec:method}

Let $\Omega\subset\R^d$ be a bounded Lipschitz domain and let the terminal time $T>0$. Although Evo-GTransNet can be applied to various types of parabolic PDEs, as shown in the numerical experiments, the model problem considered here for algorithm design and analysis is the following linear diffusion equation with a homogeneous Dirichlet boundary condition:
\begin{equation}
  \begin{cases}
  u_t-\nabla\cdot(a(\bx,t)\nabla u)=f(\bx,t),
    & (\bx,t)\in\Omega\times(0,T],\\
  u(\bx,t)=0,
    & (\bx,t)\in\partial\Omega\times(0,T],\\
  u(\bx,0)=u_0(\bx),
    & \bx\in\Omega.
  \end{cases}
  \label{eq:linear_parabolic}
\end{equation}
Let us assume that the scalar diffusion coefficient is measurable and uniformly positive and bounded,
$0<a_0\le a(\bx,t)\le a_1<\infty$, and that the data have sufficient regularity for the formulations under consideration.  For almost every $t\in(0,T]$, the weak problem is to find $u(t)\in H_0^1(\Omega)$ such that
\begin{equation}
  \langle u_t,v\rangle
  +\int_\Omega a(\bx,t)\nabla u\cdot\nabla v\dd \bx
  =\int_\Omega f(\bx,t)v\dd \bx
  \qquad\forall\,v\in H_0^1(\Omega),
  \label{eq:continuous_weak_form}
\end{equation}
with $u(0)=u_0$.  Here $\langle\cdot,\cdot\rangle$ denotes the $H^{-1}(\Omega)$--$H_0^1(\Omega)$ duality pairing, or the $L^2(\Omega)$ inner product when $u_t\in L^2(\Omega)$.  This weak formulation is the standard parabolic Galerkin framework; GTransNet enters through the construction of the finite-dimensional trial space.


\subsection{Boundary-conforming fixed-feature space and Galerkin method of lines}

Let $\{\psi_j\}_{j=1}^N$ be a prescribed GTransNet dictionary (or another baseline feature dictionary of the type described in Section~\ref{sec:background}). Let $D\in W^{1,\infty}(\Omega)\cap C^1(\overline\Omega)$ have zero trace on $\partial\Omega$. Since the tanh features used here are smooth and bounded, the hard-boundary functions
\begin{equation}
  \varphi_j(\bx)=D(\bx)\psi_j(\bx),
  \label{eq:raw_trial_space}
\end{equation}
define a boundary-conforming  trial space
\begin{equation}
  V_N=\spanop\{\varphi_1,\ldots,\varphi_N\}
  \subset H_0^1(\Omega).
  \label{eq:raw_trial_space1}
\end{equation}
The hidden features and $V_N$ are fixed after sampling, and only the output coefficients depend on time, i.e., for the homogeneous problem \eqref{eq:linear_parabolic}, the approximation is
\begin{equation}
  u_N(\bx,t)
  =\sum_{j=1}^N c_j(t)\varphi_j(\bx)
  =\vphi(\bx)^\top\vc(t).
  \label{eq:eg_ansatz}
\end{equation}
where  $\vphi(\bx):=(\varphi_1(\bx),\ldots,\varphi_N(\bx))^\top$ and $\vc(t):=(c_1(t),\ldots,c_N(t))^\top$.
In the numerical experiments, for $\Omega=(0,1)$, $\Omega=(0,1)^2$, or $\Omega=(0,1)^3$,  we use the corresponding tensor-product boundary factors:
\[
\begin{aligned}
  D(x)&=x(1-x),\\
  D(x,y)&=x(1-x)y(1-y),\\
  D(x,y,z)&=x(1-x)y(1-y)z(1-z).
\end{aligned}
\]
For general geometries, approximate-distance and R-function constructions provide a systematic approach to constructing trial functions that satisfy the essential boundary data exactly \cite{sukumar2022exactbc}.


Restricting \eqref{eq:continuous_weak_form} to $V_N$ gives the semidiscrete (in space) problem: find $u_N(t)\in V_N$ such that
\begin{equation}
  \int_\Omega \partial_tu_N\,v_N\dd \bx
  +\int_\Omega a(\bx,t)\nabla u_N\cdot\nabla v_N\dd \bx
  =\int_\Omega f(\bx,t)v_N\dd \bx,
  \qquad \forall\,v_N\in V_N.
  \label{eq:discrete_weak_form}
\end{equation}
Substituting \eqref{eq:eg_ansatz} and testing successively with
$v_N=\varphi_i$ yields the method-of-lines Galerkin system
\begin{equation}
  \mathbf M\dot{\vc}(t)+\mathbf K(t)\vc(t)=\vb(t),
  \qquad 0<t\le T,
  \label{eq:gal_ode}
\end{equation}
where 
\begin{equation}
\begin{aligned}
  M_{ij}&=\int_\Omega\varphi_j\varphi_i\dd \bx,\\
  K_{ij}(t)&=\int_\Omega a(\bx,t)\nabla\varphi_j\cdot\nabla\varphi_i\dd \bx,\\
  b_i(t)&=\int_\Omega f(\bx,t)\varphi_i\dd \bx.
\end{aligned}
  \label{eq:raw_galerkin_matrices}
\end{equation}
Thus the neural construction supplies a fixed Galerkin basis, while the evolution itself is a standard finite-dimensional parabolic system.  The weak form is preferable to strong-form collocation for diffusion: it uses only first spatial derivatives and gives a symmetric positive-semidefinite stiffness matrix when $a>0$, whereas strong-form collocation generally requires second derivatives and does not preserve this symmetry.

The raw mass matrix in \eqref{eq:gal_ode} can be severely ill-conditioned because sampled features may be nearly linearly dependent.  Section~\ref{sec:normalization} therefore uses a quadrature-weighted SVD to select a resolved retained subspace and construct identity-mass coordinates.  The implicit midpoint rule is then applied to that retained system; 
the ill-conditioned raw system is not advanced directly.

\begin{remark}[Nonhomogeneous Dirichlet boundary condition]
The analysis and implementation reported in this paper use homogeneous Dirichlet data.  Formally, nonhomogeneous data with the boundary trace $g$ can be incorporated through a sufficiently regular lifting $u_b$ and
\[
  u_N(\bx,t)=u_b(\bx,t)+\vphi(\bx)^\top\vc(t),
\]
with the compatibility condition
$u_b(\cdot,0)|_{\partial\Omega}=u_0|_{\partial\Omega}$.  If $a\nabla u_b$ has sufficient regularity, the weak formulation \eqref{eq:continuous_weak_form} 
retains the same left-hand side and takes the following right-hand side
\begin{equation}
  b_i^{\rm lift}(t) = 
  \int_\Omega f(\bx,t)\varphi_i\dd \bx
   -\int_\Omega u_{b,t}(\bx,t)\varphi_i\dd \bx
   -\int_\Omega a(\bx,t)\nabla u_b\cdot\nabla\varphi_i\dd \bx.
  \label{eq:lifted_load}
\end{equation}
The initial datum is then projected after replacing $u_0$ by
$u_0-u_b(\cdot,0)$.  A nonhomogeneous lifting enters the corresponding stability and error estimates as an additional forcing contribution.\label{nonhomo}
\end{remark}

\begin{remark}[Nonlinear evolution equations]
For a general evolution equation $u_t=\mathcal N(u,t)$, projection onto the same fixed-feature  space $V_N$ formally gives
$$
  \mathbf M\dot{\vc}=\boldsymbol F(t,\vc),$$
  where $\boldsymbol F(t,\vc)$ comes from spatial discretization of $\mathcal N(u,t)$.
The Allen--Cahn problem tests in Section~\ref{sec:numerics} further uses a separate stabilized nonlinear update on the retained fixed-feature space.
\end{remark}

\subsection{Weighted-SVD rank selection and quadrature-mass orthonormalization}\label{sec:normalization}


Let $\{\bx_q,w_q\}_{q=1}^Q$ be quadrature points with strictly positive weights $w_q>0$ on the domain $\Omega$, and let $0<\tau<1$ be a prescribed eigenvalue threshold. For functions $v$ and $z$ with well-defined nodal values, define the positive semidefinite quadrature bilinear form and its associated seminorm by
$$
    (v,z)_q:=\sum_{m=1}^Qw_mv(\bx_m)z(\bx_m)
  =\mathbf v_q^\top\mathbf W_q\mathbf z_q,
  \qquad \norm{v}_q^2=(v,v)_q,
$$
where $\mathbf v_q$ and $\mathbf z_q$ denote the corresponding nodal vectors, and $\mathbf W_q=\diag(w_1,\ldots,w_Q)$. Define
$
  (\bm\Phi)_{qj}=\varphi_j(\bx_q),
$
for $q=1,\ldots,Q,\;
   j=1,\ldots,N$, then the discrete mass matrix is given by
\begin{equation}
  \mathbf M_q=\bm\Phi^\top\mathbf W_q\bm\Phi.
\end{equation}
Let
\begin{equation}
  \mathbf B:=\mathbf W_q^{1/2}\bm\Phi
  =\mathbf U\bm\Sigma\mathbf V^\top
  \label{SVD}
\end{equation}
be a thin singular value decomposition, where $\bm\Sigma=\diag(\sigma_1,\ldots,\sigma_N)$ contains the singular values of $\mathbf B$ in nonincreasing order. It follows that
$
\mathbf M_q=\mathbf V\bm\Sigma^2\mathbf V^\top.
$

Assume that the sampled mass matrix is nonzero, then $\sigma_1>0$ and  at least one nonzero singular direction is remained. 
Our  implementation computes this SVD and retains the directions whose equivalent eigenvalues of ${\mathbf M}_q$, i.e., $\{\lambda_i=\sigma_i^2\}_{i=1}^{N}$, satisfy
 $ \lambda_i>\tau\lambda_{1}.$
Equivalently, in singular-value notation this is $\sigma_i>\sqrt{\tau}\sigma_1$. Suppose that a total of $r$ singular values are retained. Let $\mathbf V_r$ denote the retained right singular vectors and $\bm\Sigma_r$ the corresponding diagonal matrix of retained singular values. We define
\begin{equation}
  \mathbf T_r:=\mathbf V_r\bm\Sigma_r^{-1},
  \qquad
  \widetilde{\bm\Phi}:=\bm\Phi\mathbf T_r.
  \label{eq:normalized_phi}
\end{equation}  
Spectral truncation is a classical regularization and numerical-rank device \cite{hansen1987tsvd}.  Here it is applied to the quadrature-weighted feature matrix to define a retained function space, rather than merely to regularize one coefficient solve.

The matrix $\widetilde{\bm\Phi}$ is not a newly trained dictionary; its columns are retained linear combinations of the original hard-boundary features, chosen to be orthonormal with respect to the quadrature mass inner product. This is the standard weighted-SVD orthogonalization applied to the prescribed trial space \cite{hesthaven2022reduced}. At the function level, define $\widetilde\vphi(\bx):=(\widetilde\varphi_1(\bx),\ldots,\widetilde\varphi_r(\bx))^\top$ by
\begin{equation}
  \widetilde{\vphi}(\bx):=\mathbf T_r^\top\vphi(\bx),
  \qquad
  \widetilde\varphi_i(\bx)=\sum_{j=1}^N(\mathbf T_r)_{ji}\varphi_j(\bx).
  \label{eq:retained_basis}
\end{equation}
Define the retained trial space by
$$
  V_{N,r}
  :=\spanop\{\widetilde\varphi_1,\ldots,\widetilde\varphi_r\}
  \subset V_N.
$$
Since $\mathbf T_r$ forms only linear combinations, each $\widetilde\varphi_i$ has the same homogeneous boundary trace as the original functions $\varphi_j$. From this point onward, let $\valpha(t)=(\alpha_1(t),\ldots,\alpha_r(t))^\top\in\R^r$ denote the retained normalized coefficient vector, then $\vc(t)=\mathbf T_r\valpha(t)\in\R^N$ gives its representation in the original dictionary. Thus, we have 
\begin{equation}
  u_{N,r}(\bx,t)
  :=\widetilde{\vphi}(\bx)^\top\valpha(t)
  =\sum_{i=1}^r\alpha_i(t)\widetilde\varphi_i(\bx).
  \label{eq:retained_ansatz}
\end{equation}
At the quadrature nodes, the two coordinate representations satisfy
 $ \widetilde{\bm\Phi}\valpha=\bm\Phi\vc
  =\bm\Phi\mathbf T_r\valpha.$
For brevity, we still write $u_N$ for the retained approximation in the numerical experiments whenever no ambiguity arises.

For the stiffness terms, let $\mathbf G_\ell\in\R^{Q\times N}$ denote the raw derivative-value matrix with entries
\begin{equation}
  (\mathbf G_\ell)_{qj}=\partial_{x_\ell}\varphi_j(\bx_q),
  \qquad q=1,\ldots,Q,\;
   j=1,\ldots,N.
\end{equation}
These entries are the spatial derivatives of the prescribed features. In the implementation, they are evaluated analytically by the chain rule for the tanh feature maps  of the fixed network, rather than by optimization or backpropagation during training. The formulas used for derivative assembly are given in \ref{app:feature_derivatives}. The normalized derivative matrix is
\begin{equation}
  \widetilde{\mathbf G}_\ell
  :=\mathbf G_\ell\mathbf T_r,
  \qquad \ell=1,\ldots,d,
  \label{eq:normalized_derivative}
\end{equation}
since $\mathbf T_r$ is independent of the spatial variable.

\begin{remark}
Equivalently, one may diagonalize $\mathbf M_q=\mathbf V\bm\Lambda\mathbf V^\top$, retain the eigenvalues satisfying $\lambda_i>\tau\lambda_{1}$, and define
\begin{equation}
  \widetilde{\bm\Phi}
  =\bm\Phi\mathbf V_r\bm\Lambda_r^{-1/2}.
  \label{eq:eig_phi}
\end{equation}
The numerical implementation uses the SVD formulation in practice because it yields smaller mass-orthogonality defects for near-threshold directions. 
Appendix~\ref{app:threshold} states the threshold convention used in the numerical experiments, and Appendix~\ref{app:scaling} summarizes the resulting dense linear-algebra costs.
\end{remark}

By using the above weighted SVD construction, we have 
\begin{equation}
  \mathbf W_q^{1/2}\widetilde{\bm\Phi}
  =\mathbf W_q^{1/2}\bm\Phi\mathbf T_r
  =\mathbf U_r\bm\Sigma_r\mathbf V_r^\top
   \mathbf V_r\bm\Sigma_r^{-1}
  =\mathbf U_r,
\end{equation}
then 
\begin{equation}
  \widetilde{\bm\Phi}^{\top}\mathbf W_q\widetilde{\bm\Phi}
  =(\mathbf W_q^{1/2}\widetilde{\bm\Phi})^\top
   (\mathbf W_q^{1/2}\widetilde{\bm\Phi})
  =\mathbf U_r^\top\mathbf U_r
  =\mathbf I_r,
\end{equation}
 thus the following properties hold. 
\begin{proposition}[Quadrature-mass orthonormality]\label{prop:mass}
For the quadrature rule and retained SVD subspace defined above, the retained basis satisfies, in exact arithmetic,
\begin{equation}
  (\widetilde\varphi_i,\widetilde\varphi_j)_q=\delta_{ij},
  \qquad\text{equivalently}\qquad
  \widetilde{\bm\Phi}^{\top}\mathbf W_q\widetilde{\bm\Phi}=\mathbf I_r. 
\end{equation}
Consequently,
\begin{equation}
  \norm{\textstyle\sum_{i=1}^r z_i\widetilde\varphi_i}_q
  =\norm{\mathbf z}_2,
  \qquad \forall\,\mathbf z=(z_1,\ldots,z_r)^\top\in\R^r.
\end{equation}
\end{proposition}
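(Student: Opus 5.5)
The plan is to reduce both claims to the matrix identity computed just before the statement, by identifying nodal values of the retained basis with columns of $\widetilde{\bm\Phi}$.

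First I would check that the nodal vector of $\widetilde\varphi_i$ is the $i$th column of $\widetilde{\bm\Phi}$. By \eqref{eq:retained_basis}, $\widetilde\varphi_i(\bx_q)=\sum_{j}(\mathbf T_r)_{ji}\varphi_j(\bx_q)=\sum_j(\bm\Phi)_{qj}(\mathbf T_r)_{ji}=(\bm\Phi\mathbf T_r)_{qi}=(\widetilde{\bm\Phi})_{qi}$. The definition of $(\cdot,\cdot)_q$ then gives $(\widetilde\varphi_i,\widetilde\varphi_j)_q=\widetilde{\bm\Phi}_{:,i}^\top\mathbf W_q\widetilde{\bm\Phi}_{:,j}=(\widetilde{\bm\Phi}^\top\mathbf W_q\widetilde{\bm\Phi})_{ij}$. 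So the two forms of the first claim are equivalent.

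Next I would verify $\widetilde{\bm\Phi}^\top\mathbf W_q\widetilde{\bm\Phi}=\mathbf I_r$. The thin SVD \eqref{SVD} splits as $\mathbf B=\mathbf U_r\bm\Sigma_r\mathbf V_r^\top+\mathbf U_\perp\bm\Sigma_\perp\mathbf V_\perp^\top$. Since $\mathbf V$ has orthonormal columns, $\mathbf V_\perp^\top\mathbf V_r=0$ and $\mathbf V_r^\top\mathbf V_r=\mathbf I_r$. Hence $\mathbf W_q^{1/2}\widetilde{\bm\Phi}=\mathbf B\mathbf V_r\bm\Sigma_r^{-1}=\mathbf U_r$. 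The matrix $\bm\Sigma_r$ is invertible because every retained $\sigma_i>\sqrt\tau\sigma_1>0$, which uses the standing assumption that $\sigma_1>0$. Finally, $\mathbf U_r^\top\mathbf U_r=\mathbf I_r$ because the left singular vectors are orthonormal.

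The one point needing care is the cross term: the displayed computation before the statement writes $\mathbf W_q^{1/2}\bm\Phi$ as only its retained part. I would make explicit that the discarded part vanishes because $\mathbf V_\perp^\top\mathbf V_r=0$. Everything is exact arithmetic, as the statement stipulates, and no property of the quadrature beyond $w_q>0$ (so that $\mathbf W_q^{1/2}$ is real) is used.

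For the norm identity, I would take $v=\sum_i z_i\widetilde\varphi_i$. Its nodal vector is $\widetilde{\bm\Phi}\mathbf z$, so $\norm{v}_q^2=\mathbf z^\top\widetilde{\bm\Phi}^\top\mathbf W_q\widetilde{\bm\Phi}\mathbf z=\mathbf z^\top\mathbf z=\norm{\mathbf z}_2^2$. Taking nonnegative square roots gives the result.
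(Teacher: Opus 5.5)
Your proposal is correct and follows the paper's argument: the paper also computes $\mathbf W_q^{1/2}\widetilde{\bm\Phi}=\mathbf U_r$, concludes $\widetilde{\bm\Phi}^\top\mathbf W_q\widetilde{\bm\Phi}=\mathbf U_r^\top\mathbf U_r=\mathbf I_r$, and reads off the norm identity directly. Your explicit treatment of the discarded singular block via $\mathbf V^\top\mathbf V_r$, and of the identification of $\widetilde\varphi_i$'s nodal values with the columns of $\widetilde{\bm\Phi}$, fills in steps that the paper's displayed computation leaves implicit.
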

We note that this is a quadrature identity; it does not by itself imply continuous $L^2$ orthonormality.  On $V_{N,r}$, $(\cdot,\cdot)_q$ is an inner product and $\|\cdot\|_q$ is a norm.  On a general function class, they remain a bilinear form and a seminorm because nodal values need not distinguish different functions.

\subsection{Raw-to-normalized matrices and transformed system}\label{sec:transformed_system}
Let us introduce
\begin{align*}
  \mathbf D_{a,q}(t)
  &:=\diag\bigl(a(\bx_1,t),\ldots,a(\bx_Q,t)\bigr),
  &\mathbf K_{\rm raw}(t)&
  :=\sum_{\ell=1}^d
     \mathbf G_\ell^\top\mathbf W_q
     \mathbf D_{a,q}(t)\mathbf G_\ell,\\
  \mathbf f_q(t)
  &:=(f(\bx_1,t),\ldots,f(\bx_Q,t))^\top,
  &\vb_{\rm raw}(t)
  &:=\bm\Phi^\top\mathbf W_q\mathbf f_q(t).
\end{align*}
Then the coordinate map gives the retained operators
\begin{align}
  \widetilde{\mathbf K}_q(t)
  &:=\mathbf T_r^\top\mathbf K_{\rm raw}(t)\mathbf T_r
    =\sum_{\ell=1}^d
      \widetilde{\mathbf G}_\ell^\top\mathbf W_q
      \mathbf D_{a,q}(t)\widetilde{\mathbf G}_\ell,
  \notag\\
  \widetilde{\vb}_q(t)
  &:=\mathbf T_r^\top\vb_{\rm raw}(t)
    =\widetilde{\bm\Phi}^\top\mathbf W_q\mathbf f_q(t).
  \label{eq:raw_to_normalized}
\end{align}
Replacing the integrals in \eqref{eq:gal_ode} by the chosen quadrature rule gives the raw quadrature-assembled system
\[
  \mathbf M_q\dot{\vc}(t)
  +\mathbf K_{\rm raw}(t)\vc(t)
  =\vb_{\rm raw}(t).
\]
Restricting $\vc=\mathbf T_r\valpha$ and testing this system with the columns of $\mathbf T_r$ yields the retained identity-mass system
\begin{equation}
  \dot{\valpha}(t)+\widetilde{\mathbf K}_q(t)\valpha(t)
  =\widetilde{\vb}_q(t),
  \qquad 0<t\le T,
  \label{eq:retained_ode}
\end{equation}
where $\mathbf T_r^\top\mathbf M_q\mathbf T_r=\mathbf I_r$ by Proposition~\ref{prop:mass} and \eqref{eq:raw_to_normalized} gives the retained stiffness and load. Thus the retained evolution does not require solving with the raw mass matrix.
Define the sampled initial datum by
\begin{equation}
  \mathbf u_{0,q}:=(u_0(\bx_1),\ldots,u_0(\bx_Q))^\top,
\end{equation}
then
$  \valpha(0)
  =\widetilde{\bm\Phi}^\top\mathbf W_q\mathbf u_{0,q},$
When a lifting is used in the case of nonhomogeneous Dirichlet boundary condition discussed in Remark~\ref{nonhomo}, we replace $\mathbf u_{0,q}$ by the nodal vector of $u_0-u_b(\cdot,0)$.  Stiffness matrices, load vectors, and initial data are therefore assembled in the same retained identity-mass coordinates.
The subscript $q$ indicates quadrature assembly, whereas the tilde indicates representation in the retained identity-mass coordinates. We retain the notation $\widetilde{\mathbf K}_q(t)$ and $\widetilde{\vb}_q(t)$ in the analysis and in displayed numerical schemes to distinguish these operators from the exact raw-coordinate operators $\mathbf K(t)$ and $\vb(t)$.

\begin{remark}[Coordinate invariance within the retained subspace]\label{rem:coordinate_invariance}
The weighted SVD performs two distinct operations: the threshold selects the retained right-singular subspace, while the scaling by $\bm\Sigma_r^{-1}$ orthonormalizes the coordinates within that subspace. 
The approximation introduced by the weighted-SVD construction comes entirely from the rank selection; finite-precision arithmetic may further distinguish implementations that are algebraically equivalent.
\end{remark}

\begin{remark}
We distinguish the full raw condition estimate,
$$
  \kappa_{\rm full}=\operatorname{cond}_2(\mathbf M_q),
$$
from the retained spectral ratio,
$$
  \kappa_{\rm ret}
  =\operatorname{cond}_2(\mathbf V_r^\top\mathbf M_q\mathbf V_r)
  =\lambda_{1}/\lambda_{\min,\rm kept}.
$$
The latter is bounded above by $\tau^{-1}$ by construction and measures the conditioning of the raw mass quadratic form restricted to the retained right-singular subspace. The full-column-rank coordinate transformation satisfies
$$
  \kappa_2(\mathbf T_r):=\norm{\mathbf T_r}_2\norm{\mathbf T_r^\dagger}_2
  =\sqrt{\kappa_{\rm ret}}.
$$
In double-precision arithmetic, the full condition estimate is no longer quantitatively meaningful once the smallest singular value is at or below $\epsilon_{\rm machine}$ times the largest. Several raw mass matrices in the main experiments fall into this regime, while the remaining ones are still severely ill-conditioned. Accordingly, Table~\ref{tab:diagnostics} in numerical experiments reports resolution categories rather than numerical condition estimates for the unresolved cases. By contrast, $\kappa_{\rm ret}$ is of order $10^{11}$ and is computed from the retained singular directions. Thus, the threshold determines the effective rank, while the subsequent coordinate scaling produces an identity-mass evolution on the retained subspace.
\end{remark}

\subsection{Time stepping along with the fixed-feature space}\label{sec:time_stepping}

For the model linear diffusion problem \eqref{eq:linear_parabolic}, restricting the quadrature Galerkin residual to the weighted-SVD retained space via the fixed coordinate transformation $\vc=\mathbf T_r\valpha$ yields the identity-mass semidiscrete initial-value problem \eqref{eq:retained_ode}. The sampled dictionary, quadrature rule, and transformation matrix $\mathbf T_r$ are constructed once before time stepping and remain fixed throughout the time interval $0\le t\le T$.
Let $0=t_0<t_1<\cdots<t_{N_t}=T$ be a uniform partition with time step size $\Delta t=T/N_t$, and let $t_{n+1/2}=(t_n+t_{n+1})/2$. Set $\widetilde{\mathbf K}_q^{n+1/2}:=\widetilde{\mathbf K}_q(t_{n+1/2})$ and
$\widetilde{\vb}_q^{n+1/2}:=\widetilde{\vb}_q(t_{n+1/2})$. For simplicity of illustration, we apply the implicit midpoint rule to the time discretization \eqref{eq:retained_ode}, which yields the following fully discrete system:
\begin{equation}
\begin{aligned}
  \left(\mathbf I_r+\frac{\Delta t}{2}\widetilde{\mathbf K}_q^{n+1/2}\right)\valpha^{n+1}
  &=\left(\mathbf I_r-\frac{\Delta t}{2}\widetilde{\mathbf K}_q^{n+1/2}\right)\valpha^n
  +\Delta t\,\widetilde{\vb}_q^{n+1/2},\\
  &\hspace{7em} n=0,1,\ldots,N_t-1.
\end{aligned}
  \label{eq:im_retained}
\end{equation}
with $\valpha^{0}=\valpha(0)$.
Finally, the fully discrete solution is given by $u_{N,r}^n(\bx)=\widetilde{\vphi}(\bx)^\top\valpha^n$ at $t=t_n$.
For an autonomous linear system, this update is algebraically identical to the classical Crank--Nicolson (or trapezoidal) scheme \cite{crank1947practical}. For time-dependent $\widetilde{\mathbf K}_q(t)$ or $\widetilde{\vb}_q(t)$, however, \eqref{eq:im_retained} differs slightly from the classical Crank--Nicolson scheme because both quantities are evaluated at the midpoint $t_{n+1/2}$. When $\widetilde{\mathbf K}_q^{n+1/2}$ is symmetric positive semidefinite and $\Delta t>0$, the matrix on the left-hand side of \eqref{eq:im_retained} is symmetric positive definite. Therefore, each time step is uniquely solvable. Algorithm~\ref{alg:evo_gtransnet_im} summarizes the complete Evo-GTransNet procedure for solving \eqref{eq:linear_parabolic}.

\begin{algorithm}[H]
\caption{The Evo-GTransNet Method for Solving \eqref{eq:linear_parabolic}
}
\label{alg:evo_gtransnet_im}
\begin{algorithmic}[1]
\Require The PDE data $a,f,u_0,T,\Delta t$; a hard-boundary factor $D$; a positive-weight quadrature rule; and a prescribed eigenvalue threshold $0<\tau<1$.
\Ensure $\{\valpha^n,u_{N,r}^n\}_{n=0}^{N_t}$.
\State Generate the raw GTransNet  dictionary $\{\psi_j\}_{j=1}^N$.
\State Form $\varphi_j=D\psi_j$, evaluate $\bm\Phi$ and $\mathbf G_\ell$ at the quadrature nodes, and set $\mathbf B=\mathbf W_q^{1/2}\bm\Phi$.
\State Compute the thin SVD $\mathbf B=\mathbf U\bm\Sigma\mathbf V^\top$.  If $\sigma_1=0$, terminate; otherwise retain $\mathbf U_r$, $\bm\Sigma_r$, and $\mathbf V_r$ satisfying $\sigma_i>\sqrt{\tau}\,\sigma_1$.
\State Set $\mathbf T_r=\mathbf V_r\bm\Sigma_r^{-1}$, $\widetilde{\bm\Phi}=\bm\Phi\mathbf T_r$, and $\widetilde{\mathbf G}_\ell=\mathbf G_\ell\mathbf T_r$ for $\ell=1,\ldots,d$.
\State Set $\valpha^0=\widetilde{\bm\Phi}^\top\mathbf W_q\mathbf u_{0,q}$ and reconstruct $u_{N,r}^0=\sum_{i=1}^r\alpha_i^0\widetilde\varphi_i$.
\For{$n=0,\ldots,N_t-1$}
  \State Set $t_{n+1/2}=(n+\tfrac12)\Delta t$ and assemble $\widetilde{\mathbf K}_q^{n+1/2}$ and $\widetilde{\vb}_q^{n+1/2}$.
  \State Solve \eqref{eq:im_retained} for $\valpha^{n+1}$.
  \State Reconstruct $u_{N,r}^{n+1}=\sum_{i=1}^r\alpha_i^{n+1}\widetilde\varphi_i$.
\EndFor
\end{algorithmic}
\end{algorithm}

All sampled features, $\mathbf T_r$, $\widetilde{\bm \Phi}$, and $\widetilde {\mathbf G}_\ell$ remain fixed throughout the time loop.  For a time-dependent diffusion coefficient or source, only $\widetilde{\mathbf K}_q^{n+1/2}$ or $\widetilde {\vb}_q^{n+1/2}$ is reassembled at the midpoint.  The online computation is therefore an $r\times r$ dense solve at each time step, not a solve with the ill-conditioned raw mass matrix.  Recomputing $\mathbf T_r$ during the time march would define time-dependent coordinates and introduce additional $\dot {\mathbf T}_r$ terms; that is not the method analyzed or implemented here.  At every time level, the reconstruction remains in the same retained space $V_{N,r}$; the following stability and error analysis concerns precisely this fixed-feature space evolution.

\section{Stability and error analysis}\label{sec:analysis}

Classical analyses of Galerkin approximations for parabolic PDEs reduce the error analysis to the stability of the projected evolution and the approximation properties of the trial space \cite{douglas1970galerkin,thomee2006galerkin}. In the present Evo-GTransNet method, however, the mass, stiffness, and load forms are assembled by quadrature, so their comparison with the corresponding continuous forms introduces an additional variational-crime, or consistency, error \cite{strang1972variational}.
The numerical analysis is carried out on the fixed retained space selected by the quadrature-weighted truncated SVD. On this space, the identity-mass relation holds exactly with respect to the quadrature inner product $(\cdot,\cdot)_q$ defined above. 
The consistency residual $\eta_q$ introduced below compares the continuous weak formulation with the quadrature forms on this fixed retained space; it does not compare the quadrature-selected retained space with a separately constructed exact-mass retained space. 


\subsection{Energy stability}

Let $a(\bx,t)\ge a_0>0$ and $f=0$ in
\eqref{eq:linear_parabolic}. Then $\widetilde{\vb}_q=0$, and \eqref{eq:retained_ode} becomes 
\begin{equation}
  \dot{\valpha}(t)+\widetilde{\mathbf K}_q(t)\valpha(t)=\mathbf 0,
\end{equation}
with
\begin{equation*}
  (\widetilde{\mathbf K}_q)_{ij}(t)
  =\sum_{m=1}^Qw_ma(\bx_m,t)
\nabla\widetilde\varphi_j(\bx_m)\cdot\nabla\widetilde\varphi_i(\bx_m).
\end{equation*}
\begin{theorem}[Quadrature-semidiscrete energy stability]\label{thm:energy}
Assume that $\widetilde{\mathbf K}_q(t)=\widetilde{\mathbf K}_q(t)^\top\succeq0$ for almost every $t\in(0,T)$. Then, for almost every $t\in(0,T)$, the retained Evo-GTransNet solution satisfies
\begin{equation}
  \frac12\frac{\dd}{\dd t}\norm{u_{N,r}(t)}_q^2
  +\valpha(t)^\top\widetilde{\mathbf K}_q(t)\valpha(t)=0.
  \label{eq:energy_identity}
\end{equation}
Moreover, for almost every $t\in(0,T)$, 
\begin{equation}
  \valpha(t)^\top\widetilde{\mathbf K}_q(t)\valpha(t)
  =\sum_{m=1}^Qw_ma(\bx_m,t)
   |\nabla u_{N,r}(\bx_m,t)|^2.
   \label{eq:quadrature_diffusion_energy}
\end{equation}
In particular, $\norm{u_{N,r}(t)}_q\le\norm{u_{N,r}(0)}_q$ for every $t\in[0,T]$.
\end{theorem}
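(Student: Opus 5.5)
The plan is to pass everything through Proposition~\ref{prop:mass}, which turns the quadrature seminorm of the retained solution into the Euclidean norm of its coefficient vector. Since $u_{N,r}(\cdot,t)=\sum_i\alpha_i(t)\widetilde\varphi_i$, Proposition~\ref{prop:mass} gives
\[
\norm{u_{N,r}(t)}_q^2=\norm{\valpha(t)}_2^2=\valpha(t)^\top\valpha(t)
\qquad\text{for every } t\in[0,T].
\]
Because $\valpha$ solves the linear ODE $\dot\valpha+\widetilde{\mathbf K}_q\valpha=\mathbf 0$, it is absolutely continuous; this holds under integrability of $t\mapsto\widetilde{\mathbf K}_q(t)$, which follows from the bounded measurable coefficient $a$. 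Consequently $t\mapsto\norm{\valpha(t)}_2^2$ is absolutely continuous, and for a.e.\ $t$
\[
\tfrac12\tfrac{\dd}{\dd t}\norm{\valpha}_2^2=\valpha^\top\dot\valpha .
\]
Substituting $\dot\valpha=-\widetilde{\mathbf K}_q\valpha$ gives \eqref{eq:energy_identity} directly. The symmetry assumption is not strictly needed for this step. It only ensures that the quadratic form is the natural energy.

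For \eqref{eq:quadrature_diffusion_energy}, expand the quadratic form using the entrywise formula for $(\widetilde{\mathbf K}_q)_{ij}$ displayed just before the theorem:
\[
\valpha^\top\widetilde{\mathbf K}_q\valpha=\sum_{m=1}^Q w_m a(\bx_m,t)\sum_{i,j}\alpha_i\alpha_j\,\nabla\widetilde\varphi_i(\bx_m)\cdot\nabla\widetilde\varphi_j(\bx_m).
\]
The inner double sum equals $|\sum_i\alpha_i\nabla\widetilde\varphi_i(\bx_m)|^2=|\nabla u_{N,r}(\bx_m,t)|^2$, since the gradient commutes with the fixed, $\bx$-independent linear combination. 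Equivalently, in matrix form, $\widetilde{\mathbf G}_\ell\valpha$ is the nodal vector of $\partial_{x_\ell}u_{N,r}$ by \eqref{eq:normalized_derivative}, so
\[
\valpha^\top\widetilde{\mathbf K}_q\valpha=\sum_\ell(\widetilde{\mathbf G}_\ell\valpha)^\top\mathbf W_q\mathbf D_{a,q}(\widetilde{\mathbf G}_\ell\valpha).
\]
Since $w_m>0$ and $a\ge a_0>0$, this expression is nonnegative. This confirms the hypothesis $\widetilde{\mathbf K}_q\succeq0$ in the present setting, although the theorem assumes it anyway.

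Finally, integrate \eqref{eq:energy_identity} from $0$ to $t$. Absolute continuity gives
\[
\tfrac12\norm{u_{N,r}(t)}_q^2+\int_0^t\valpha^\top\widetilde{\mathbf K}_q\valpha\dd s=\tfrac12\norm{u_{N,r}(0)}_q^2 .
\]
The integral is nonnegative, so $\norm{u_{N,r}(t)}_q\le\norm{u_{N,r}(0)}_q$ for every $t\in[0,T]$, not merely almost every $t$.

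The only delicate point is the regularity in time. One has to justify that $\valpha$ is absolutely continuous, so that the a.e.\ derivative integrates back correctly and the chain rule holds for $\norm{\valpha}_2^2$. I would handle this by invoking Carathéodory existence for linear ODEs with $L^1$ coefficient matrix, using $\norm{\widetilde{\mathbf K}_q(t)}\le a_1\sum_\ell\norm{\widetilde{\mathbf G}_\ell}^2\norm{\mathbf W_q}$. Everything else is algebraic and rests on Proposition~\ref{prop:mass}.
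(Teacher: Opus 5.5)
Your proposal is correct and follows essentially the same route as the paper. Identity mass from Proposition~\ref{prop:mass} turns $\valpha^\top\dot\valpha$ into $\frac12\frac{\dd}{\dd t}\norm{u_{N,r}}_q^2$, substituting $\nabla u_{N,r}=\sum_i\alpha_i\nabla\widetilde\varphi_i$ into the quadrature stiffness form gives the second identity, and integrating while dropping the nonnegative dissipation term gives the bound for every $t$. The only difference is that you justify absolute continuity of $\valpha$ through Carath\'eodory theory, which needs measurability of each nodal map $t\mapsto a(\bx_m,t)$ (an assumption the paper states only later, in Section~\ref{sec:error_decomposition}); the paper simply takes the coefficient path to be absolutely continuous.
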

\begin{proof}
For the absolutely continuous coefficient path, multiplying the normalized coefficient equation by $\valpha(t)^\top$ for almost every $t\in(0,T)$ gives
\begin{equation}
  \valpha^\top\dot{\valpha}
  +\valpha^\top\widetilde{\mathbf K}_q(t)\valpha=0.
\end{equation}
Because the retained basis is fixed in time and has identity mass with respect to the quadrature inner product, we have
$$
\valpha^\top\dot{\valpha}
=\frac12\frac{\dd}{\dd t}\norm{u_{N,r}(t)}_q^2
$$
for almost every $t\in(0,T)$, which proves \eqref{eq:energy_identity}. For the homogeneous formulation,
$\nabla u_{N,r}=\sum_{i=1}^r\alpha_i\nabla\widetilde\varphi_i$.
Substituting this expression into the definition of $\widetilde{\mathbf K}_q(t)$ gives the second identity \eqref{eq:quadrature_diffusion_energy}. Finally, since $\widetilde{\mathbf K}_q(t)\succeq0$, we have
$\valpha(t)^\top\widetilde{\mathbf K}_q(t)\valpha(t)\ge0$.
Integrating \eqref{eq:energy_identity} from $0$ to $t$ yields
$$
  \frac12\norm{u_{N,r}(t)}_q^2
  +\int_0^t\valpha(s)^\top\widetilde{\mathbf K}_q(s)\valpha(s)\,\dd s
  =\frac12\norm{u_{N,r}(0)}_q^2.
$$
Since the integral is nonnegative, it follows that
$\norm{u_{N,r}(t)}_q^2\le\norm{u_{N,r}(0)}_q^2$.
Taking square roots completes the proof.
\end{proof}

\begin{remark}
If the construction is instead repeated with the exact continuous mass inner product, using a separately defined exact-mass-normalized basis, the identical coefficient algebra gives the corresponding continuous $L^2$ energy identity.  The quadrature-selected basis used in the implementation is not assumed to be continuously $L^2$-orthonormal.
\end{remark}

\begin{theorem}[Implicit midpoint contractivity]\label{thm:im}
If the midpoint stiffness matrix satisfies
\[
\widetilde{\mathbf K}_q^{n+1/2}
=(\widetilde{\mathbf K}_q^{n+1/2})^\top\succeq0,
\]
the homogeneous implicit midpoint update
\begin{equation}
  \left(\mathbf I_r+\frac{\Delta t}{2}\widetilde{\mathbf K}_q^{n+1/2}\right)\valpha^{n+1}
  =\left(\mathbf I_r-\frac{\Delta t}{2}\widetilde{\mathbf K}_q^{n+1/2}\right)\valpha^n
\end{equation}
obeys $\norm{\valpha^{n+1}}_2\le\norm{\valpha^n}_2$ for every $\Delta t>0$.  Because $\norm{u_{N,r}^n}_q=\norm{\valpha^n}_2$, it is equivalently contractive in the assembly-quadrature norm of the retained function.  
\end{theorem}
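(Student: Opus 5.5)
The plan is to mimic the energy argument of Theorem~\ref{thm:energy} at the discrete level by testing the update with the midpoint value. Write $\mathbf K:=\widetilde{\mathbf K}_q^{n+1/2}$ and $\bar\valpha:=\tfrac12(\valpha^{n+1}+\valpha^n)$. The update can be rearranged as
\[
  \valpha^{n+1}-\valpha^n=-\Delta t\,\mathbf K\,\bar\valpha .
\]
Well-posedness comes first. Since $\mathbf K$ is symmetric positive semidefinite, $\mathbf I_r+\frac{\Delta t}{2}\mathbf K$ is symmetric positive definite for every $\Delta t>0$. Hence $\valpha^{n+1}$ is uniquely defined, as already noted in Section~\ref{sec:time_stepping}.

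Next I take the Euclidean inner product of the rearranged update with $\bar\valpha$. The left side telescopes by the polarization identity:
\[
  (\valpha^{n+1}-\valpha^n)^\top\tfrac12(\valpha^{n+1}+\valpha^n)=\tfrac12\bigl(\norm{\valpha^{n+1}}_2^2-\norm{\valpha^n}_2^2\bigr).
\]
The cross terms cancel because the Euclidean inner product is symmetric. The right side equals $-\Delta t\,\bar\valpha^\top\mathbf K\bar\valpha$, which is $\le0$ since $\mathbf K\succeq0$ and $\Delta t>0$. This gives
\[
  \norm{\valpha^{n+1}}_2^2=\norm{\valpha^n}_2^2-2\Delta t\,\bar\valpha^\top\mathbf K\bar\valpha\le\norm{\valpha^n}_2^2 ,
\]
and taking square roots yields the claimed contraction, with no restriction on $\Delta t$.

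As an alternative check, one can diagonalize $\mathbf K=\mathbf Q\bm\Lambda\mathbf Q^\top$. The amplification matrix is then $\mathbf Q\,\diag\bigl((1-\tfrac{\Delta t}{2}\lambda_i)/(1+\tfrac{\Delta t}{2}\lambda_i)\bigr)\mathbf Q^\top$, which is symmetric with eigenvalues in $(-1,1]$. Its $2$-norm is therefore at most one.

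For the equivalent statement in function form, I invoke Proposition~\ref{prop:mass} with $\mathbf z=\valpha^n$. Since $u_{N,r}^n=\sum_i\alpha_i^n\widetilde\varphi_i$ in the fixed retained basis, this gives $\norm{u_{N,r}^n}_q=\norm{\valpha^n}_2$, and likewise at level $n+1$. Contractivity in the assembly-quadrature norm then follows immediately.

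The only point needing care is that the identity-mass structure is what lets the Euclidean coefficient norm stand for the quadrature norm. This holds exactly because $\mathbf T_r$ is fixed in time, so I do not expect a genuine obstacle.
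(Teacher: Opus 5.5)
Your proof is correct and follows the paper's own argument: the paper rewrites the update as $\valpha^{n+1}-\valpha^n+\frac{\Delta t}{2}\widetilde{\mathbf K}_q^{n+1/2}(\valpha^{n+1}+\valpha^n)=\mathbf 0$ and tests with $\valpha^{n+1}+\valpha^n$ (twice your $\bar\valpha$), which yields the same discrete energy identity. Your spectral alternative is the bound $\norm{\mathbf C_n}_2\le 1$ that the paper uses later, in the proof of Theorem~\ref{thm:full_error}.
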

\begin{proof}
Since $\widetilde{\mathbf K}_q^{n+1/2}\succeq0$, $\mathbf I_r+(\Delta t/2)\widetilde{\mathbf K}_q^{n+1/2}$ is symmetric positive definite, so the update is well defined.
Rearrange the update as
\begin{equation}
  \valpha^{n+1}-\valpha^n
  +\frac{\Delta t}{2}\widetilde{\mathbf K}_q^{n+1/2}
   (\valpha^{n+1}+\valpha^n)=\mathbf 0.
\end{equation}
Taking the Euclidean inner product with $\valpha^{n+1}+\valpha^n$ gives the
discrete energy identity
\begin{equation}
  \norm{\valpha^{n+1}}_2^2-\norm{\valpha^n}_2^2
  +\frac{\Delta t}{2}
   (\valpha^{n+1}+\valpha^n)^\top
   \widetilde{\mathbf K}_q^{n+1/2}
   (\valpha^{n+1}+\valpha^n)=0.
\end{equation}
The last term is nonnegative because $\widetilde{\mathbf K}_q^{n+1/2}$ is symmetric positive
semidefinite.  Hence $\norm{\valpha^{n+1}}_2^2\le\norm{\valpha^n}_2^2$.
The argument is stepwise, so $\widetilde{\mathbf K}_q^{n+1/2}$ may change from one time step to
the next.
\end{proof}
\begin{remark}
    The norm equivalence above relies specifically on the assembly normalization $\widetilde{\bm\Phi}^\top\mathbf W_q\widetilde{\bm\Phi}=\mathbf I_r$.  A separate validation rule or the continuous $L^2$ inner product generally induces a different retained mass matrix; consequently, this theorem makes no contractivity assertion in either of those norms.
\end{remark}
\subsection{Error decomposition and comparison defects}\label{sec:error_decomposition}

We now return to the general forced problem, retaining the domain,
quadrature rule, and fixed trial spaces defined above. For the
remainder of the error analysis, assume that
$
  u\in C([0,T];L^2(\Omega)).
$
Assume also that all functions and derivatives appearing in the quadrature expressions below---including $u$, $u_t$, $\nabla u$, $f$, and the comparison functions introduced below together with their time derivatives---admit well-defined values at the quadrature nodes, with the time measurability required by the displayed integrals. Assume further that each nodal coefficient function $a(\bx_m,\cdot)$ is measurable and that
$
  \widetilde{\vb}_q\in L^1(0,T;\mathbb R^r).
$
Together with the coefficient and stability assumptions below, these conditions give an absolutely continuous semidiscrete coefficient path and justify the energy integrations.

Define the discrete $H^1$ norm and the quadrature diffusion form by
\begin{align*}
 & 
  \norm{v}_{1,q}^2
  :=\norm{v}_q^2+\sum_{\ell=1}^d\norm{\partial_{x_\ell}v}_q^2,
  \\
 & \mathcal A_q(t;v,z)
  :=\sum_{m=1}^Qw_ma(\bx_m,t)
      \nabla v(\bx_m)\cdot\nabla z(\bx_m).
\end{align*}
On $V_{N,r}$, $\|\cdot\|_{1,q}$ is a norm because it dominates $\|\cdot\|_q$, which is itself a norm by Proposition~\ref{prop:mass}. 
For a quantity $g$ evaluated at the quadrature nodes, we define the retained-space discrete dual seminorm by
\begin{equation}
  \norm{g}_{-1,q,r}
  :=\sup_{0\ne z\in V_{N,r}}
  \frac{|(g,z)_q|}{\norm{z}_{1,q}}.
  \label{eq:discrete_dual_norm}
\end{equation}
Let $P_{r,q}$ denote the quadrature-orthogonal projection onto $V_{N,r}$,
$$
  (P_{r,q}g,z)_q=(g,z)_q,
  \qquad \forall\,z\in V_{N,r}.
$$
This projection is well defined because $(\cdot,\cdot)_q$ is an inner product on $V_{N,r}$ by Proposition~\ref{prop:mass}.  The quadrature rule and retained space are fixed, so $P_{r,q}$ is independent of time.  Since the semi-discrete solution $u_{N,r}(t)\in V_{N,r}$ is the function reconstructed from the coefficient solution of \eqref{eq:retained_ode}, it satisfies the following  time-continuous quadrature-assembled semidiscrete Galerkin problem:
\begin{equation}
  (\partial_t u_{N,r},z)_q
  +\mathcal A_q(t;u_{N,r},z)
  =(f,z)_q,
  \qquad \forall z\in V_{N,r},
  \label{eq:q_galerkin}
\end{equation}
with $u_{N,r}(0)=P_{r,q}u_0$.
(By the identity-mass relation, the projected initial condition is equivalent to the coefficient initialization specified in Section~\ref{sec:transformed_system}). Let $u_{N,r}^n$ denote the corresponding fully discrete solution to the implicit midpoint approximation of this time-continuous semidiscrete solution, equivalently the one produced from \eqref{eq:im_retained}. To distinguish the approximation error in the full sampled space from the error introduced by the reduction to the retained space, let us choose a $v_N\in C^1([0,T];V_N)$ and set
$$
  v_{N,r}(t):=P_{r,q}v_N(t)\in V_{N,r}.
$$
Then $v_{N,r}\in C^1([0,T];V_{N,r})$ and
$\partial_t v_{N,r}=P_{r,q}\partial_t v_N$.  At every time level, it holds
\begin{equation}
\begin{aligned}
  u(t_n)-u_{N,r}^n
  ={}&\underbrace{u(t_n)-v_N(t_n)}_{\text{full-space approximation}}
  +\underbrace{v_N(t_n)-v_{N,r}(t_n)}_{\text{retained-space reduction}}
  \\[1mm]
  &+\underbrace{v_{N,r}(t_n)-u_{N,r}(t_n)}_{\text{Galerkin evolution and quadrature}}
  +\underbrace{u_{N,r}(t_n)-u_{N,r}^n}_{\text{time discretization}}.
\end{aligned}
\label{eq:error_split_retained}
\end{equation}

\paragraph{Comparison defects}
Let
$
  \eta_N^{\rm full}:[0,T]\to[0,\infty), 
  ~
  \eta_{t,N}^{\rm full}:(0,T)\to[0,\infty)
$
be comparison functions satisfying
\begin{align}
  \norm{u(t)-v_N(t)}_{L^2}
  +\norm{u(t)-v_N(t)}_{1,q}
  &\le\eta_N^{\rm full}(t),
  &&0\le t\le T,
  \label{eq:eta_full}\\
  \norm{u_t(t)-\partial_t v_N(t)}_{-1,q,r}
  &\le\eta_{t,N}^{\rm full}(t),
  &&\text{for almost every }t\in(0,T).
  \label{eq:eta_full_time}
\end{align}
Assume that $\eta_{t,N}^{\rm full}\in L^2(0,T). $
Define the retained-space reduction defect by
\begin{equation}
  \eta_{\rm red}(t)
  :=\norm{v_N(t)-v_{N,r}(t)}_{L^2}
    +\norm{v_N(t)-v_{N,r}(t)}_{1,q}.
  \label{eq:eta_reduction}
\end{equation}
Consequently,
\begin{align}
  \norm{u(t)-v_{N,r}(t)}_{L^2}
  +\norm{u(t)-v_{N,r}(t)}_{1,q}
  &\le\eta_N^{\rm full}(t)+\eta_{\rm red}(t),
  \label{eq:retained_spatial_defect}\\
  \norm{u_t(t)-\partial_t v_{N,r}(t)}_{-1,q,r}
  &=\norm{u_t(t)-\partial_t v_N(t)}_{-1,q,r}
  \le\eta_{t,N}^{\rm full}(t).
  \label{eq:retained_time_defect}
\end{align}
The equality in \eqref{eq:retained_time_defect} follows from the $q$-orthogonality of $P_{r,q}$ on the retained test space.  Hence no separate temporal reduction defect is needed.  The defect \eqref{eq:eta_reduction} measures the loss under the specified projection onto the SVD-selected space, and  without additional norm-specific estimates, it is not determined solely by the discarded singular values.

The initial projection mismatch is
\begin{equation}
  \eta_0:=\norm{v_{N,r}(0)-P_{r,q}u_0}_q.
  \label{eq:initial_comparison_defect}
\end{equation}
For the projected comparison path,
\begin{equation}
  \eta_0
  =\norm{P_{r,q}(v_N(0)-u_0)}_q
  \le\norm{v_N(0)-u_0}_q
  \le\eta_N^{\rm full}(0).
  \label{eq:initial_defect_control}
\end{equation}
Thus $\eta_0$ identifies the effect of projected initialization, although it is controlled by the full-space comparison defect.

\paragraph{Fixed-feature space stability and quadrature consistency}
For the fixed sampled dictionary, retained space, and quadrature rule, suppose that there are constants $c_{0,q}>0$ and $c_{\mathcal A}>0$ such that
\begin{align}
  c_{0,q}\norm{z}_{L^2}^2
  \le\norm{z}_q^2,
  &\qquad\forall\,z\in V_{N,r},
  \label{eq:mass_lower_bound}\\
  \mathcal A_q(t;z,z)
  \ge c_{\mathcal A}\norm{z}_{1,q}^2,
  &\qquad\forall\, z\in V_{N,r},\;0\le t\le T.
  \label{eq:discrete_coercivity}
\end{align}
The upper bound on $a(\bx,t)$ gives
\begin{equation}
  |\mathcal A_q(t;v,z)|
  \le a_1\norm{v}_{1,q}\norm{z}_{1,q}
  \label{eq:discrete_boundedness}
\end{equation}
for all functions whose nodal gradients are defined.  For each fixed finite-dimensional retained space, \eqref{eq:mass_lower_bound} holds for some $c_{0,q}>0$ by equivalence of the continuous $L^2$ norm and the quadrature norm on $V_{N,r}$; no lower bound uniform under refinement is asserted.  In contrast, \eqref{eq:discrete_coercivity} is an additional retained-space stability assumption. Since $\mathcal A_q$ contains only gradients, it does not follow from $a(\bx,t)\ge a_0$ alone without a corresponding discrete Poincar\'e property.

Define the quadrature consistency functional, its retained dual norm, and its accumulated size by
\begin{align}
  \mathcal R_q(t;z)
  &:= (u_t,z)_q+\mathcal A_q(t;u,z)-(f,z)_q,
  \qquad \forall\,z\in V_{N,r},
  \notag\\
  R_q(t)
  &:=\sup_{0\ne z\in V_{N,r}}
  \frac{|\mathcal R_q(t;z)|}{\norm{z}_{1,q}},\quad
  \eta_q
  :=\left[\int_0^T R_q(t)^2\dd t\right]^{1/2}.
  \label{eq:quadrature_residual}
\end{align}
For almost every $t\in(0,T)$ and every $z\in V_{N,r}$, the continuous weak equation gives
\begin{equation}
\begin{aligned}
  \mathcal R_q(t;z)
  ={}&\bigl[(u_t,z)_q-\langle u_t,z\rangle\bigr]
  +\left[\mathcal A_q(t;u,z)
    -\int_\Omega a(\bx,t)\nabla u\cdot\nabla z\dd\bx\right]
  \\
  &+\left[\int_\Omega f(\bx,t)z(\bx)\dd\bx-(f,z)_q\right].
\end{aligned}
\end{equation}
Thus $R_q$ measures the combined quadrature inconsistency in the time-derivative, diffusion, and load terms on the retained test space and we assume that $R_q\in L^2(0,T)$.
For compactness, set
\begin{equation}
  \mathcal E_t
  :=\left[\int_0^T
     \bigl(\eta_{t,N}^{\rm full}(t)\bigr)^2\dd t\right]^{1/2},\qquad
      \mathcal E_{\rm sp}
  :=\sup_{0\le t\le T}
     \bigl(\eta_N^{\rm full}(t)+\eta_{\rm red}(t)\bigr).
  \label{eq:comparison_defect_summary}
\end{equation}
The assumptions $\eta_{t,N}^{\rm full}\in L^2(0,T)$ and $R_q\in L^2(0,T)$ imply $\mathcal E_t<\infty$ and $\eta_q<\infty$, respectively. We additionally assume that $\mathcal E_{\rm sp}<\infty.$

\subsection{Conditional semidiscrete comparison estimate and fully discrete error estimate}\label{sec:semidiscrete_error}

\begin{lemma}[Conditional semidiscrete comparison estimate]\label{lem:semidiscrete_error}
Under the preceding assumptions, there exists a constant $C_{\rm sd}>0$, depending only on $T$, $a_1$, $c_{\mathcal A}$, $c_{0,q}$, and the retained space $V_{N,r}$, such that
\begin{equation}
  \sup_{0\le t\le T}\norm{u(t)-u_{N,r}(t)}_{L^2}
  \le C_{\rm sd}
  \bigl(\mathcal E_{\rm sp}+\mathcal E_t+\eta_q\bigr).
  \label{eq:semidiscrete_error_bound}
\end{equation}
\end{lemma}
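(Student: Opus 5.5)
We would follow the standard Wheeler-type splitting, using the comparison path $v_{N,r}$ as an elliptic-like intermediate. Write
\[
  u-u_{N,r}=(u-v_{N,r})+(v_{N,r}-u_{N,r})=:\rho+\theta,\qquad \theta(t)\in V_{N,r}.
\]
By \eqref{eq:retained_spatial_defect}, $\norm{\rho(t)}_{L^2}\le\eta_N^{\rm full}(t)+\eta_{\rm red}(t)\le\mathcal E_{\rm sp}$. It therefore suffices to bound $\theta$. We will do this first in the quadrature norm. Afterwards \eqref{eq:mass_lower_bound} transfers the bound to $L^2$ with the factor $c_{0,q}^{-1/2}$.

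The first step derives an error equation for $\theta$. Subtract \eqref{eq:q_galerkin} from the definition of $\mathcal R_q$ and insert $v_{N,r}$. For every $z\in V_{N,r}$ and almost every $t$ this gives
\[
  (\partial_t\theta,z)_q+\mathcal A_q(t;\theta,z)
  =\mathcal R_q(t;z)-(u_t-\partial_t v_{N,r},z)_q-\mathcal A_q(t;u-v_{N,r},z).
\]
The second step takes $z=\theta$. By Proposition~\ref{prop:mass} and the fixed-in-time basis, $(\partial_t\theta,\theta)_q=\frac12\frac{\dd}{\dd t}\norm{\theta}_q^2$, exactly as in the proof of Theorem~\ref{thm:energy}. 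The left side is then bounded below using \eqref{eq:discrete_coercivity}. On the right side we use three estimates:
\begin{itemize}
\item $|\mathcal R_q(t;\theta)|\le R_q\norm{\theta}_{1,q}$;
\item $|(u_t-\partial_tv_{N,r},\theta)_q|\le\eta_{t,N}^{\rm full}\norm{\theta}_{1,q}$, by \eqref{eq:retained_time_defect} and the definition \eqref{eq:discrete_dual_norm};
\item $|\mathcal A_q(t;u-v_{N,r},\theta)|\le a_1(\eta_N^{\rm full}+\eta_{\rm red})\norm{\theta}_{1,q}$, by \eqref{eq:discrete_boundedness} and \eqref{eq:retained_spatial_defect}.
\end{itemize}
Young's inequality absorbs $\norm{\theta}_{1,q}^2$ into $c_{\mathcal A}\norm{\theta}_{1,q}^2$, leaving
\[
  \frac{\dd}{\dd t}\norm{\theta}_q^2\le \frac{3}{c_{\mathcal A}}\Bigl(R_q^2+(\eta_{t,N}^{\rm full})^2+a_1^2(\eta_N^{\rm full}+\eta_{\rm red})^2\Bigr).
\]
Integrating over $(0,t)$, which is justified by absolute continuity of the coefficient path, gives
\[
  \sup_t\norm{\theta(t)}_q^2\le \norm{\theta(0)}_q^2+\frac{3}{c_{\mathcal A}}\bigl(\eta_q^2+\mathcal E_t^2+a_1^2T\mathcal E_{\rm sp}^2\bigr).
\]
No Gr\"onwall argument is needed.

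The third step handles the initial term. Since $u_{N,r}(0)=P_{r,q}u_0$, we have $\theta(0)=v_{N,r}(0)-P_{r,q}u_0$. Hence $\norm{\theta(0)}_q=\eta_0\le\eta_N^{\rm full}(0)\le\mathcal E_{\rm sp}$ by \eqref{eq:initial_defect_control}. Combining everything yields
\[
  \norm{u-u_{N,r}}_{L^2}\le \mathcal E_{\rm sp}+c_{0,q}^{-1/2}\bigl(\mathcal E_{\rm sp}+\sqrt{3/c_{\mathcal A}}\,(\eta_q+\mathcal E_t+a_1\sqrt T\,\mathcal E_{\rm sp})\bigr),
\]
which is \eqref{eq:semidiscrete_error_bound} with an explicit $C_{\rm sd}$.

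The main obstacle is the bookkeeping in the error equation. The residual $\mathcal R_q$ is defined with $u$ rather than $v_{N,r}$. We must therefore check that the terms $(u_t-\partial_tv_{N,r},\cdot)_q$ and $\mathcal A_q(t;u-v_{N,r},\cdot)$ enter only through the retained dual seminorm and the $\norm{\cdot}_{1,q}$ defect, and that no term requires an $L^2$ (rather than quadrature) control of $\rho$ in time. A related point is that the coercivity \eqref{eq:discrete_coercivity} is what controls the full $\norm{\theta}_{1,q}$; the mass part of that norm cannot be obtained from $\mathcal A_q$ without this assumption. The dependence of $C_{\rm sd}$ on $V_{N,r}$ then enters only through $c_{0,q}$.
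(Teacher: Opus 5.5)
Your proposal is correct and follows the paper's proof essentially step for step. The paper uses the same discrete error $e_r=v_{N,r}-u_{N,r}$ (your $\theta$) and the same error identity, tests with $z=e_r$, and applies \eqref{eq:discrete_coercivity}, \eqref{eq:discrete_boundedness} and Young's inequality; it then integrates directly without a Gr\"onwall argument using $\norm{e_r(0)}_q=\eta_0\le\eta_N^{\rm full}(0)$, and transfers to $L^2$ via \eqref{eq:mass_lower_bound}. The only difference is that you track $C_{\rm sd}$ explicitly, whereas the paper leaves it as a generic constant $C$.
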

\begin{proof}
Set $e_r(t)=v_{N,r}(t)-u_{N,r}(t)$. For every $z\in V_{N,r}$ and for almost every $t\in(0,T)$, the $q$-orthogonality of $P_{r,q}$ implies
$$
  (\partial_t v_{N,r}-u_t,z)_q
  =(P_{r,q}\partial_t v_N-u_t,z)_q
  =(\partial_t v_N-u_t,z)_q.
$$
Combining \eqref{eq:q_galerkin} with the definition of $\mathcal R_q$ gives, for every $z\in V_{N,r}$ and for almost every $t\in(0,T)$,
\begin{align}
  (\partial_t e_r,z)_q+\mathcal A_q(t;e_r,z)
  &=(\partial_t v_N-u_t,z)_q
    +\mathcal A_q(t;v_{N,r}-u,z)
  +\mathcal R_q(t;z).
  \label{eq:semidiscrete_error_identity}
\end{align}
By \eqref{eq:quadrature_residual}, we have
$|\mathcal R_q(t;z)|\le R_q(t)\|z\|_{1,q}$ for every $z\in V_{N,r}$.
Note that $e_r$ is absolutely continuous. Taking $z=e_r(t)$ and using \eqref{eq:discrete_coercivity}, \eqref{eq:discrete_boundedness}, \eqref{eq:retained_spatial_defect}, and Young's inequality yields, for almost every $t\in(0,T)$,
\begin{align}
  \frac12\frac{\dd}{\dd t}\norm{e_r(t)}_q^2
  +\frac{c_{\mathcal A}}{2}\norm{e_r(t)}_{1,q}^2
  &\le C\Bigl[
     \bigl(\eta_{t,N}^{\rm full}(t)\bigr)^2
     +\bigl(\eta_N^{\rm full}(t)+\eta_{\rm red}(t)\bigr)^2
  +R_q(t)^2\Bigr].
  \label{eq:semidiscrete_energy_bound}
\end{align}
Here $e_r(0)=P_{r,q}(v_N(0)-u_0)$, so
$\norm{e_r(0)}_q=\eta_0$. Integration of \eqref{eq:semidiscrete_energy_bound} gives
\begin{equation}
  \sup_{0\le t\le T}\norm{e_r(t)}_q
  \le C\bigl(\eta_0+\mathcal E_{\rm sp}+\mathcal E_t+\eta_q\bigr).
  \label{eq:galerkin_error_q}
\end{equation}
Furthermore
$$
  \norm{u(t)-u_{N,r}(t)}_{L^2}
  \le\norm{u(t)-v_{N,r}(t)}_{L^2}
    +c_{0,q}^{-1/2}\norm{e_r(t)}_q.
$$
Combining this inequality with \eqref{eq:retained_spatial_defect}, \eqref{eq:initial_defect_control} and \eqref{eq:galerkin_error_q} proves \eqref{eq:semidiscrete_error_bound}.
\end{proof}



\begin{lemma}[Implicit midpoint consistency]\label{lem:im_consistency}
Assume that $\valpha\in C^3([0,T];\mathbb R^r)$ solves
$$
 \dot{\valpha}(t)+\widetilde{\mathbf K}_q(t)\valpha(t)
 =\widetilde{\vb}_q(t).
$$
Define the one-step residual
\begin{align}
  \boldsymbol\rho_{\rm IM}^{n+1/2}
  &:={\valpha}(t_{n+1})-{\valpha}(t_n)
  +\frac{\Delta t}{2}\widetilde{\mathbf K}_q(t_{n+1/2})
   \bigl({\valpha}(t_{n+1})+{\valpha}(t_n)\bigr)\\
  &\quad-\Delta t\,\widetilde{\vb}_q(t_{n+1/2}).
  \label{eq:im_residual_definition}
\end{align}
Then
\begin{equation}
  \norm{\boldsymbol\rho_{\rm IM}^{n+1/2}}_2
  \le\Delta t^3\Theta_{\rm IM},
  \label{eq:im_residual_bound}
\end{equation}
where one admissible choice, independent of $\Delta t$, is
\begin{equation}
  \Theta_{\rm IM}
  :=\frac{1}{24}\norm{\valpha^{(3)}}_{L^\infty(0,T;\ell^2)}
   +\frac{1}{8}\norm{\widetilde{\mathbf K}_q}_{L^\infty(0,T;2)}
    \norm{\ddot{\valpha}}_{L^\infty(0,T;\ell^2)}.
  \label{eq:im_residual_explicit}
\end{equation}
Here $\valpha^{(j)}:=\mathrm d^j\valpha/\mathrm dt^j$, and
\begin{align*}
  \|\widetilde{\mathbf K}_q\|_{L^\infty(0,T;2)}
  &:=\operatorname*{ess\,sup}_{0\le t\le T}\|\widetilde{\mathbf K}_q(t)\|_2,\qquad
  \|\valpha^{(j)}\|_{L^\infty(0,T;\ell^2)}
  :=\operatorname*{ess\,sup}_{0\le t\le T}\|\valpha^{(j)}(t)\|_2.
\end{align*}
\end{lemma}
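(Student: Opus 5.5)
We will expand everything about the midpoint $t_{n+1/2}$ and use the ODE at the midpoint to cancel the leading terms. Write $h=\Delta t/2$ and $\valpha_\pm:=\valpha(t_{n+1/2}\pm h)$. Evaluating $\dot{\valpha}(t_{n+1/2})+\widetilde{\mathbf K}_q(t_{n+1/2})\valpha(t_{n+1/2})=\widetilde{\vb}_q(t_{n+1/2})$ and multiplying by $\Delta t$ lets us eliminate the load term. This gives the split
\[
\boldsymbol\rho_{\rm IM}^{n+1/2}
=\bigl[\valpha_+-\valpha_--\Delta t\,\dot{\valpha}(t_{n+1/2})\bigr]
+\Delta t\,\widetilde{\mathbf K}_q(t_{n+1/2})\Bigl[\tfrac12(\valpha_++\valpha_-)-\valpha(t_{n+1/2})\Bigr].
\]
The key feature is that only the midpoint values of $\widetilde{\mathbf K}_q$ and $\widetilde{\vb}_q$ appear. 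Consequently no smoothness of $\widetilde{\mathbf K}_q$ or $\widetilde{\vb}_q$ in time is needed, only the boundedness of $\widetilde{\mathbf K}_q$ at the midpoints.

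For the first bracket, we will use Taylor's theorem with integral remainder, applied componentwise to avoid vector-valued mean-value issues:
\[
\valpha_\pm=\valpha(t_{n+1/2})\pm h\dot{\valpha}+\tfrac{h^2}{2}\ddot{\valpha}+\int_0^{\pm h}\tfrac{(\pm h-s)^2}{2}\valpha^{(3)}(t_{n+1/2}+s)\,\dd s.
\]
Subtracting cancels the even-order terms. The remainder is then bounded in $\ell^2$ by Minkowski's integral inequality:
\[
2\cdot\frac{h^3}{6}\norm{\valpha^{(3)}}_{L^\infty(0,T;\ell^2)}=\frac{\Delta t^3}{24}\norm{\valpha^{(3)}}_{L^\infty(0,T;\ell^2)}.
\]

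For the second bracket, we will expand only to second order with remainder:
\[
\valpha_\pm=\valpha(t_{n+1/2})\pm h\dot{\valpha}+\int_0^{\pm h}(\pm h-s)\ddot{\valpha}(t_{n+1/2}+s)\,\dd s.
\]
Averaging, each remainder has norm at most $\tfrac{h^2}{2}\norm{\ddot{\valpha}}_{L^\infty}$. Hence $\norm{\tfrac12(\valpha_++\valpha_-)-\valpha(t_{n+1/2})}_2\le \tfrac{h^2}{2}\norm{\ddot{\valpha}}_{L^\infty}=\tfrac{\Delta t^2}{8}\norm{\ddot{\valpha}}_{L^\infty}$.

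Multiplying by $\Delta t\,\norm{\widetilde{\mathbf K}_q(t_{n+1/2})}_2$ gives the second term of $\Theta_{\rm IM}$, and the triangle inequality finishes the proof.

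The main subtlety is using the essential supremum $\norm{\widetilde{\mathbf K}_q}_{L^\infty(0,T;2)}$ to bound the pointwise midpoint value $\norm{\widetilde{\mathbf K}_q(t_{n+1/2})}_2$. We will handle this by noting that the ODE together with $\valpha\in C^3$ forces $\widetilde{\mathbf K}_q\valpha-\widetilde{\vb}_q$ to be continuous. Alternatively, we will simply interpret the norm as the supremum over the sampled midpoint values, since $\widetilde{\mathbf K}_q(t)$ is assembled from nodal values of $a(\bx_m,t)$, which are evaluated pointwise in the scheme. We will state this interpretation explicitly.
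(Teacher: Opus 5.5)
Your proposal is correct and follows the paper's proof: the same midpoint substitution, the same two-bracket identity for $\boldsymbol\rho_{\rm IM}^{n+1/2}$, and the same midpoint Taylor bounds with constants $1/24$ and $1/8$, which you simply make explicit through integral remainders. Of your two remedies for the ess-sup subtlety (which the paper passes over silently), only the second works. Continuity of $\widetilde{\mathbf K}_q\valpha-\widetilde{\vb}_q$ gives no control of $\norm{\widetilde{\mathbf K}_q(t_{n+1/2})}_2$, because that matrix acts on the averaging defect rather than on $\valpha(t_{n+1/2})$. So take the supremum over the evaluated midpoints, or assume continuity of $\widetilde{\mathbf K}_q$ as in the paper's sufficient condition $\widetilde{\mathbf K}_q\in C^2([0,T])$.
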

\begin{proof}
Evaluating the semidiscrete coefficient equation at
$\bar t_n=t_{n+1/2}$ gives
$$
  \widetilde{\vb}_q(\bar t_n)
  =
  \dot{\valpha}(\bar t_n)
  +\widetilde{\mathbf K}_q(\bar t_n)
   \overline{\valpha}_n,
  \qquad
  \overline{\valpha}_n:=\valpha(\bar t_n).
$$
Set
$
  \valpha_\pm
  :=\valpha\left(\bar t_n\pm\frac{\Delta t}{2}\right).
$
Substituting the midpoint identity into \eqref{eq:im_residual_definition} and collecting the two stiffness terms yields 
\begin{equation}
  \boldsymbol\rho_{\rm IM}^{n+1/2}
  =\bigl(\valpha_+-\valpha_--\Delta t\,\dot{\valpha}(\bar t_n)\bigr)
  +\Delta t\,\widetilde{\mathbf K}_q(\bar t_n)
   \left(\frac{\valpha_++\valpha_-}{2}-\overline{\valpha}_n\right).
  \label{eq:im_residual_identity}
\end{equation}
The midpoint Taylor estimates
$$
  \norm{\valpha_+-\valpha_--\Delta t\,\dot{\valpha}(\bar t_n)}_2
  \le\frac{\Delta t^3}{24}
       \norm{\valpha^{(3)}}_{L^\infty(0,T;\ell^2)}
$$
and
$$
  \left\|\frac{\valpha_++\valpha_-}{2}-\overline{\valpha}_n\right\|_2
  \le\frac{\Delta t^2}{8}
       \norm{\ddot{\valpha}}_{L^\infty(0,T;\ell^2)}
$$
imply \eqref{eq:im_residual_bound}.
\end{proof}
\begin{remark}
A sufficient regularity condition is
$\widetilde{\mathbf K}_q,\widetilde{\vb}_q\in C^2([0,T])$, which will ensure
$\valpha\in C^3([0,T];\mathbb R^r)$ for this finite-dimensional system.    
\end{remark}

\begin{theorem}[Conditional fully discrete error estimate]\label{thm:full_error}
Suppose the fixed-space stability, approximation, and quadrature consistency conditions of Lemma~\ref{lem:semidiscrete_error} hold and the semidiscrete coefficient solution satisfies
$
  \valpha\in C^3([0,T];\mathbb R^r).
$
Then
\begin{equation}
  \max_{0\le n\le N_t}\norm{u(t_n)-u_{N,r}^n}_{L^2}
  \le C_{\rm fd}\bigl(
       \mathcal E_{\rm sp}+\mathcal E_t+\eta_q
       +\Delta t^2\Theta_{\rm IM}\bigr),
  \label{eq:err_bound}
\end{equation}
where $C_{\rm fd}>0$ is a constant independent of $\Delta t$.  
\end{theorem}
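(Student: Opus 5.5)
The estimate follows from the last term of the error split \eqref{eq:error_split_retained}. I would write
\[
u(t_n)-u_{N,r}^n=\bigl(u(t_n)-u_{N,r}(t_n)\bigr)+\bigl(u_{N,r}(t_n)-u_{N,r}^n\bigr)
\]
and bound the two pieces separately. The first piece is handled entirely by Lemma~\ref{lem:semidiscrete_error}: at each $t_n$ it is bounded in $L^2$ by $C_{\rm sd}(\mathcal E_{\rm sp}+\mathcal E_t+\eta_q)$, uniformly in $n$. So the new work is the time-discretization error on the fixed retained space. Set $\mathbf e^n:=\valpha(t_n)-\valpha^n\in\R^r$. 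Because $\widetilde{\vphi}$ is fixed and orthonormal in the assembly quadrature (Proposition~\ref{prop:mass}), we have $\norm{u_{N,r}(t_n)-u_{N,r}^n}_q=\norm{\mathbf e^n}_2$. The lower bound \eqref{eq:mass_lower_bound} then gives
\[
\norm{u_{N,r}(t_n)-u_{N,r}^n}_{L^2}\le c_{0,q}^{-1/2}\norm{\mathbf e^n}_2 .
\]

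To get the recursion for $\mathbf e^n$, subtract the scheme \eqref{eq:im_retained} from the residual definition \eqref{eq:im_residual_definition}; the load terms cancel. This leaves
\[
\Bigl(\mathbf I_r+\tfrac{\Delta t}{2}\widetilde{\mathbf K}_q^{n+1/2}\Bigr)\mathbf e^{n+1}
=\Bigl(\mathbf I_r-\tfrac{\Delta t}{2}\widetilde{\mathbf K}_q^{n+1/2}\Bigr)\mathbf e^{n}+\boldsymbol\rho_{\rm IM}^{n+1/2}.
\]
The initial error vanishes, $\mathbf e^0=\mathbf 0$, because the semidiscrete and fully discrete paths start from the same $\valpha(0)$. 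Now write $\mathbf A_\pm:=\mathbf I_r\pm\frac{\Delta t}{2}\widetilde{\mathbf K}_q^{n+1/2}$. Since $\widetilde{\mathbf K}_q^{n+1/2}$ is symmetric positive semidefinite, the argument of Theorem~\ref{thm:im} gives $\norm{\mathbf A_+^{-1}\mathbf A_-}_2\le1$; it also gives $\norm{\mathbf A_+^{-1}}_2\le1$, because the eigenvalues of $\mathbf A_+$ are at least $1$. Hence
\[
\norm{\mathbf e^{n+1}}_2\le\norm{\mathbf e^n}_2+\norm{\boldsymbol\rho_{\rm IM}^{n+1/2}}_2 .
\]
Summing over at most $N_t=T/\Delta t$ steps and applying Lemma~\ref{lem:im_consistency} yields
\[
\max_n\norm{\mathbf e^n}_2\le N_t\,\Delta t^3\Theta_{\rm IM}=T\,\Delta t^2\Theta_{\rm IM}.
\]

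Combining the two pieces gives \eqref{eq:err_bound} with $C_{\rm fd}=\max\{C_{\rm sd},\,T c_{0,q}^{-1/2}\}$, which does not depend on $\Delta t$.

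The main obstacle is the symmetry and positive semidefiniteness of the midpoint stiffness matrix, which the contraction step needs. This holds here without extra assumptions: $\widetilde{\mathbf K}_q$ is a weighted Gram matrix with positive weights $w_m$ and nodal coefficients $a(\bx_m,t)\ge a_0>0$, so it is symmetric positive semidefinite. Two smaller points also need care. First, $\mathbf e^0$ is exactly zero because of the shared initialization, so no initial term appears. Second, Lemma~\ref{lem:im_consistency} requires $\valpha\in C^3$, which is part of the hypotheses. With these in place, the remaining steps are stability times consistency, in the usual Lax-type way.
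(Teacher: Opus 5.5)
Your proposal is correct and follows the paper's proof essentially step for step. You split off the semidiscrete error via Lemma~\ref{lem:semidiscrete_error}, derive the coefficient error recursion with zero initial error, and bound the amplification and inverse matrices by $1$ using the symmetric positive semidefiniteness of the weighted-Gram stiffness. You then sum the residuals with Lemma~\ref{lem:im_consistency} and pass to $L^2$ through the identity-mass relation and \eqref{eq:mass_lower_bound}, and your explicit choice $C_{\rm fd}=\max\{C_{\rm sd},\,T c_{0,q}^{-1/2}\}$ is a valid instance of the paper's constant.
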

\begin{proof}
Lemma~\ref{lem:semidiscrete_error} controls $u-u_{N,r}$. For the temporal error, set
$$
  \vd^n:=\valpha(t_n)-\valpha^n.
$$
Both coefficient paths use the same projected initial data, so $\vd^0=\mathbf 0$. Subtracting the numerical update from \eqref{eq:im_residual_definition} gives
\begin{equation}
  \left(\mathbf I_r+\frac{\Delta t}{2}\widetilde{\mathbf K}_q^{n+1/2}\right)\vd^{n+1}
  =\left(\mathbf I_r-\frac{\Delta t}{2}\widetilde{\mathbf K}_q^{n+1/2}\right)\vd^n
   +\boldsymbol\rho_{\rm IM}^{n+1/2}.
  \label{eq:im_error_recursion}
\end{equation}
Set $s=\Delta t/2$, $\mathbf K_n=\widetilde{\mathbf K}_q^{n+1/2}$, and
$$
  \mathbf C_n
  :=(\mathbf I_r+s\mathbf K_n)^{-1}(\mathbf I_r-s\mathbf K_n).
$$
Then \eqref{eq:im_error_recursion} is equivalent to
$$
  \vd^{n+1}
  =\mathbf C_n\vd^n
   +(\mathbf I_r+s\mathbf K_n)^{-1}\boldsymbol\rho_{\rm IM}^{n+1/2}.
$$
By the representation \eqref{eq:raw_to_normalized}, the positivity of the quadrature weights, and $a(\bx_m,t)\ge a_0>0$, $\mathbf K_n=\widetilde{\mathbf K}_q^{n+1/2}$ is symmetric positive semidefinite. Hence the spectral theorem gives 
$$
  \norm{\mathbf C_n}_2\le1,
  \qquad
  \norm{(\mathbf I_r+s\mathbf K_n)^{-1}}_2\le1.
$$
Consequently,
\begin{equation}
  \norm{\vd^{n+1}}_2
  \le\norm{\vd^n}_2+\norm{\boldsymbol\rho_{\rm IM}^{n+1/2}}_2.
\end{equation}
Since $\vd^0=\mathbf0$, iteration of this inequality and Lemma~\ref{lem:im_consistency} give 
$$
\begin{aligned}
  \norm{\vd^n}_2
  &\le
  \sum_{j=0}^{n-1}
  \norm{\boldsymbol\rho_{\rm IM}^{j+1/2}}_2
  \le
  n\Delta t^3\Theta_{\rm IM}
  \le
  T\Delta t^2\Theta_{\rm IM}.
\end{aligned}
$$
Therefore,
$$
  \max_{0\le n\le N_t}
  \norm{\vd^n}_2
  \le
  T\Delta t^2\Theta_{\rm IM}.
$$
The quadrature identity-mass relation gives
$$
  \norm{u_{N,r}(t_n)-u_{N,r}^n}_q
  =\norm{\vd^n}_2.
$$
By \eqref{eq:mass_lower_bound},
$$
  \norm{u_{N,r}(t_n)-u_{N,r}^n}_{L^2}
  \le
  c_{0,q}^{-1/2}\norm{\vd^n}_2
  \le
  c_{0,q}^{-1/2}
  T\Delta t^2\Theta_{\rm IM}.
$$
Combining this temporal estimate with Lemma~\ref{lem:semidiscrete_error} and the triangle inequality proves \eqref{eq:err_bound}.
\end{proof}

\begin{remark}
Theorem~\ref{thm:full_error} is a conditional estimate for each fixed sampled dictionary and quadrature rule, not a probabilistic approximation theorem for GTransNet features. The retained-space reduction is the fixed $q$-orthogonal projection loss defined in \eqref{eq:eta_reduction}. A spatial-refinement convergence rate additionally requires the fixed-space stability constants to remain bounded and the stated comparison and quadrature defects to tend to zero.
\end{remark}

\section{Numerical experiments}\label{sec:numerics}

Unless explicitly stated otherwise, the manufactured parabolic benchmarks in this section use homogeneous Dirichlet boundary conditions. The boundary condition is imposed strongly through the boundary factor $D$ in the trial functions; it is neither fitted by least squares nor enforced through a boundary penalty. The manufactured exact solutions satisfy this boundary condition. In the terminology of verification and validation, these are verification tests based on the method of manufactured solutions rather than physical validation cases \cite{roache2002mms}.
The source term in each manufactured linear benchmark is obtained by substituting the stated exact solution into the model diffusion problem \eqref{eq:linear_parabolic}.
unless otherwise stated. 
When the diffusion coefficient $a$ depends on time, both $a$ and $f$ are evaluated at the temporal midpoint $t_{n+1/2}$ used in the implicit midpoint scheme.

\subsection{Experimental protocol and overview}

The numerical comparisons use four prescribed fixed-feature dictionaries.
\begin{enumerate}[leftmargin=2em]
\item \textbf{TransNet}: \eqref{eq:trans_feature}, with independently sampled unit-sphere directions and $\xi_j\stackrel{\rm iid}{\sim}U[0,1]$ in its single hidden layer.
\item \textbf{GTransNet}: \eqref{eq:gtrans_first}--\eqref{eq:variance_control}, with one additional hidden layer and $\delta=0.5$.
\item \textbf{Gaussian RF}: $\psi_j(\bx)=\tanh\{\gamma[\ba_j^\top(\bx-\bx_c)+R\xi_j]\}$, with all components of $\ba_j$ and $\xi_j$ sampled independently from $\mathcal N(0,1)$.
\item \textbf{Uniform ELM}: the same one-layer formula, with all components of $\ba_j$ and $\xi_j$ sampled independently from $U[-1,1]$.
\end{enumerate}

Random sampling uses NumPy's \texttt{default\_rng}, initialized with the stated integer seed. For GTransNet, each seed generates the first-layer directions, offsets, and second-layer weights in that order. The one- and two-dimensional dictionaries use $\bx_c=(1/2,\ldots,1/2)$ and $R=0.8$, while the three-dimensional dictionary uses the same center and $R=0.9$.

Within each comparison, all dictionaries use the same output-evolution solver, boundary factor, time step, and quadrature-mass orthonormalization. Comparisons are performed at the same evolved output dimension $N$. GTransNet additionally uses a first-layer width $N_1$ to generate its final $N$ features, so the reported error comparisons are not cost matched. For the two one-dimensional benchmark families, $\gamma$ is selected by minimizing the mean validation error over the pilot seeds $100$--$104$ and is then fixed for the disjoint evaluation seeds $0$--$19$. No parameter is tuned on the evaluation seeds. The candidate sets, selection protocol, and all selected or prescribed values of $\gamma$ are reported in \ref{app:gamma_candidates}.

The one-dimensional Galerkin systems use positive Gauss--Legendre quadrature for assembly and separate composite-midpoint grids for validation. The two- and three-dimensional experiments use distinct tensor-product Gauss--Legendre rules for assembly and validation, so the reported errors are evaluated away from the assembly nodes. The feature-derivative formulas are given in \ref{app:feature_derivatives}, the assembly conventions in \ref{app:assembly}, and the multidimensional cross-grid assessments in \ref{app:cross_grid}.

Unless stated otherwise, the retained linear systems are advanced by the implicit midpoint rule with truncation threshold $\tau=10^{-12}$. Table~\ref{tab:experiment_overview} summarizes the principal configurations. The sensitivity studies use the corresponding one-dimensional base configuration except for the parameter being varied. Here $Q_a$ and $Q_v$ denote the total numbers of assembly and validation nodes, respectively. The entry $N/N_1$ records the evolved dimension and the GTransNet first-layer width, respectively.

\begin{table}[ht!]
\centering
\caption{Overview of the principal numerical configurations.  A dash indicates that validation is performed against a coefficient-space reference rather than on a separate spatial grid.}
\label{tab:experiment_overview}
\tableformat
\begin{tabular}{@{}p{0.45\textwidth}cccc@{}}
\toprule
Benchmark & $N/N_1$ & $Q_a/Q_v$ & $T/\Delta t$ & seeds\\
\midrule
1D global space--time diagnostic & $60/240$ & $320/1000$ & $0.25/0.0025$ & 3\\
1D oscillatory coefficient & $60/240$ & $320/4096$ & $0.25/0.0025$ & 20\\
1D $\eps$-dependent target & $180/720$ & $512/4096$ & $0.25/0.0025$ & 20\\
Implicit-midpoint convergence & $60/240$ & $320/\text{--}$ & varied & 1\\
2D fixed/time-dependent operator & $150/600$ & $36^2/64^2$ & $0.06/0.003$ & 10\\
2D nonseparable evolution & $180/720$ & $42^2/64^2$ & $0.16/0.004$ & 10\\
3D nonseparable evolution & $320/1280$ & $20^3/22^3$ & $0.12/0.006$ & 5\\
2D Allen--Cahn & $150/600$ & $40^2/60^2$ & $1.0/0.002$ & 5\\
\bottomrule
\end{tabular}
\end{table}

For validation nodes $\bx_q^{(v)}$ and weights $w_q^{(v)}$, the reported final relative error is
\begin{equation}
  e_{\rm rel}(T)=
  \left[
    \frac{\sum_q w_q^{(v)}|u_N(\bx_q^{(v)},T)-u_{\rm ref}(\bx_q^{(v)},T)|^2}
         {\sum_q w_q^{(v)}|u_{\rm ref}(\bx_q^{(v)},T)|^2}
  \right]^{1/2}.
  \label{eq:validation_error}
\end{equation}
Initial projection errors use the same formula at $t=0$.  Unless explicitly stated otherwise, displayed error bars are sample standard deviations over the listed seeds, computed with divisor $n-1$.  In the compact statistical tables below, each two-line entry places the sample mean on the first line and the sample standard deviation in parentheses on the second.  Displayed means and standard deviations are rounded to three significant digits; calculations and improvement factors use the unrounded data.  The column headed ``gain T/G'' is $\bar e_{\rm TransNet}/\bar e_{\rm GTransNet}$.

\subsection{One-dimensional approximation benchmarks}

\subsubsection{An oscillatory-coefficient parabolic benchmark}

We solve the equation
\begin{equation}
  u_t-(a_\eps(x)u_x)_x=f(x,t),\qquad
  a_\eps(x)=1+\frac12\cos\left(\frac{2\pi x}{\eps}\right),
  \label{eq:1d_multi}
\end{equation}
with exact solution
\begin{equation}
  u(x,t)=e^{-t}\sin(16\pi x).
\end{equation}
The exact spatial profile is fixed as $\eps$ varies, whereas the diffusion coefficient and manufactured forcing vary with $\eps$.  This is therefore an oscillatory-coefficient, high-frequency manufactured test rather than a homogenization-style test with an $\eps$-dependent exact solution.  We use the configuration in Table~\ref{tab:experiment_overview}; Table~\ref{tab:main1d} reports statistics on the twenty evaluation seeds after selecting $\gamma$ on the disjoint pilot set.

\begin{table}[ht!]
\centering
\caption{One-dimensional oscillatory-coefficient benchmark.  Entries are twenty-seed final relative $L^2$ errors; the second line in each entry is the sample standard deviation.}
\label{tab:main1d}
\tableformat
\begin{tabular}{@{}cccccc@{}}
\toprule
$\eps$ & TransNet & GTransNet & Gaussian RF & Uniform ELM & gain T/G\\
\midrule
0.20 & \meanstd{4.50\times10^{-2}}{5.43\times10^{-2}} & \meanstd{3.60\times10^{-4}}{1.52\times10^{-4}} & \meanstd{4.48\times10^{-1}}{3.00\times10^{-1}} & \meanstd{3.37\times10^{-1}}{2.76\times10^{-1}} & $125.0$\\
0.10 & \meanstd{8.93\times10^{-2}}{1.30\times10^{-1}} & \meanstd{4.36\times10^{-4}}{2.10\times10^{-4}} & \meanstd{6.81\times10^{-1}}{3.19\times10^{-1}} & \meanstd{5.42\times10^{-1}}{4.16\times10^{-1}} & $204.8$\\
0.05 & \meanstd{4.27\times10^{-2}}{5.03\times10^{-2}} & \meanstd{7.74\times10^{-4}}{4.31\times10^{-4}} & \meanstd{3.44\times10^{-1}}{2.11\times10^{-1}} & \meanstd{2.53\times10^{-1}}{1.69\times10^{-1}} & $55.2$\\
\bottomrule
\end{tabular}
\end{table}

Because the exact spatial profile is fixed, this experiment measures representation of a prescribed high-frequency solution under a common oscillatory-coefficient Galerkin operator.  At the stated hyperparameters and matched nominal $N$, GTransNet has the lowest mean error.  The TransNet distribution is visibly seed dependent, so we also compare medians.  For $\eps=0.20,0.10,0.05$, its median errors are $3.17\times10^{-2}$, $5.35\times10^{-2}$, and $3.08\times10^{-2}$, whereas the corresponding GTransNet medians are $3.83\times10^{-4}$, $4.40\times10^{-4}$, and $7.06\times10^{-4}$.  The median comparison supports the same ordering as the means.

\subsubsection{An \texorpdfstring{$\eps$}{epsilon}-dependent exact-solution benchmark}

To test a more demanding situation in which the solution itself depends on the small scale, we solve the equation
\begin{equation}
  u_t-\nu u_{xx}=f(x,t),\qquad \nu=10^{-3},
\end{equation}
with $u(x,t)=e^{-t}q_\eps(x)$ and
\begin{equation}
  q_\eps(x)=\sin(\pi x)+0.20\sin\!\left(\frac{2\pi x}{\eps}\right).
\end{equation}
For $\eps=0.20,0.10,0.05$, the high-frequency component has modes $10,20,40$, respectively.  The values satisfy $2/\eps\in\mathbb N$, ensuring exact compatibility with the homogeneous boundary condition.  This harder approximation test uses the configuration in Table~\ref{tab:experiment_overview}.  Table~\ref{tab:epsdep1d} gives the twenty-seed errors.

\begin{table}[ht!]
\centering
\caption{One-dimensional $\eps$-dependent exact-solution benchmark.  Entries are twenty-seed final relative $L^2$ errors; the second line is the sample standard deviation.}
\label{tab:epsdep1d}
\tableformat
\begin{tabular}{@{}cccccc@{}}
\toprule
$\eps$ & TransNet & GTransNet & Gaussian RF & Uniform ELM & gain T/G\\
\midrule
0.20 & \meanstd{5.47\times10^{-6}}{3.52\times10^{-6}} & \meanstd{2.65\times10^{-6}}{9.00\times10^{-7}} & \meanstd{8.37\times10^{-5}}{7.71\times10^{-5}} & \meanstd{8.85\times10^{-5}}{6.36\times10^{-5}} & $2.1$\\
0.10 & \meanstd{1.43\times10^{-4}}{1.80\times10^{-4}} & \meanstd{1.59\times10^{-5}}{4.11\times10^{-6}} & \meanstd{2.97\times10^{-3}}{3.53\times10^{-3}} & \meanstd{2.75\times10^{-3}}{3.22\times10^{-3}} & $9.0$\\
0.05 & \meanstd{6.00\times10^{-3}}{5.44\times10^{-3}} & \meanstd{1.30\times10^{-4}}{3.26\times10^{-5}} & \meanstd{5.60\times10^{-2}}{2.67\times10^{-2}} & \meanstd{6.58\times10^{-2}}{3.20\times10^{-2}} & $46.2$\\
\bottomrule
\end{tabular}
\end{table}

At the reported hyperparameters and matched nominal output dimension, GTransNet gives the lowest mean error for all three values of $\eps$.  As the high mode increases from $10$ to $40$, the TransNet-to-GTransNet mean-error ratio increases from $2.1$ to $46.2$.  The corresponding median-error ratios are $1.6$, $6.6$, and $36.4$, supporting the same ordering without relying only on the means.

\subsubsection{Classical piecewise-linear finite-element baseline}

To place the fixed-feature errors beside a classical spatial discretization, Table~\ref{tab:p1_fem} reports a continuous piecewise-linear Galerkin method on a uniform mesh, with consistent mass, $16$-point elementwise Gauss integration, the same implicit midpoint step $\Delta t=0.0025$, and the same separate $4096$-point validation rule.  The number of interior finite-element unknowns equals the nominal evolved output dimension in the corresponding feature experiment.  The comparison therefore matches evolved dimension rather than computational cost: GTransNet additionally constructs $N_1$ first-layer features, whereas the one-dimensional finite-element matrices are sparse and structured.

\begin{table}[ht!]
\centering
\caption{Uniform continuous piecewise-linear finite-element baseline at the same nominal number of evolved unknowns.}
\label{tab:p1_fem}
\tableformat
\begin{tabular}{@{}cccc@{}}
\toprule
$\eps$ & interior DOFs & oscillatory coefficient & $\eps$-dependent target\\
\midrule
0.20 & $60/180$ & $6.70\times10^{-2}$ & $2.53\times10^{-4}$\\
0.10 & $60/180$ & $9.61\times10^{-2}$ & $1.64\times10^{-3}$\\
0.05 & $60/180$ & $6.56\times10^{-2}$ & $8.83\times10^{-3}$\\
\bottomrule
\end{tabular}
\end{table}

At these matched nominal dimensions, GTransNet has a lower error than the uniform P1 reference in all six cases.  TransNet also has a lower error in all six, with modest margins in the fixed-profile family.  This is a conventional low-order, uniform-mesh baseline; the feature and finite-element constructions have different computational costs.

\subsubsection{Feature-count sensitivity}

Table~\ref{tab:conv} varies $N$ on the $\eps=0.10$ oscillatory-coefficient benchmark.  This five-seed study illustrates dependence on feature count, while the twenty-seed comparison in Table~\ref{tab:main1d} provides the broader sample statistics.  All rows use the same $320$-point assembly rule, separate $4096$-point validation grid, and fixed $\gamma=10,20,8$ for TransNet, GTransNet, and Gaussian RF; only $N$ and the stated $N_1$ change.

\begin{table}[ht!]
\centering
\caption{Feature-count sensitivity at $\eps=0.10$.  Entries are five-seed final relative $L^2$ errors; the second line is the sample standard deviation.}
\label{tab:conv}
\tableformat
\begin{tabular}{@{}ccccc@{}}
\toprule
$N$ & $N_1$ & TransNet & GTransNet & Gaussian RF\\
\midrule
20 & 80  & \meanstd{1.16}{1.50\times10^{-1}} & \meanstd{1.15}{1.46\times10^{-1}} & \meanstd{1.32}{6.46\times10^{-2}}\\
30 & 120 & \meanstd{1.09}{4.54\times10^{-1}} & \meanstd{1.02\times10^{-1}}{5.90\times10^{-2}} & \meanstd{1.22}{3.01\times10^{-1}}\\
44 & 176 & \meanstd{3.51\times10^{-1}}{3.96\times10^{-1}} & \meanstd{2.21\times10^{-3}}{1.05\times10^{-3}} & \meanstd{1.01}{3.13\times10^{-1}}\\
60 & 240 & \meanstd{3.91\times10^{-2}}{2.33\times10^{-2}} & \meanstd{5.15\times10^{-4}}{3.48\times10^{-4}} & \meanstd{7.13\times10^{-1}}{4.13\times10^{-1}}\\
80 & 320 & \meanstd{5.78\times10^{-3}}{3.14\times10^{-3}} & \meanstd{1.66\times10^{-4}}{3.26\times10^{-5}} & \meanstd{3.03\times10^{-1}}{2.55\times10^{-1}}\\
\bottomrule
\end{tabular}
\end{table}

\subsection{Retained space and discretization diagnostics}

\subsubsection{Rank and conditioning}

Table~\ref{tab:diagnostics} records retained ranks, double-precision resolution of the full raw mass matrices, retained spectral ratios, and identity-mass defects for the oscillatory-coefficient benchmark.  For IEEE double precision, $\epsilon_{\rm mach}\approx2.22\times10^{-16}$; a run is unresolved when $\sigma_{\min}(\mathbf M_q)\le \epsilon_{\rm mach}\sigma_{\max}(\mathbf M_q)$.  The count in parentheses is the number of unresolved runs among the twenty samples; condition estimates are not assigned to unresolved raw matrices.

\begin{table}[ht!]
\centering
\caption{Rank and conditioning statistics for the oscillatory-coefficient benchmark.  ``Singular'' means numerically unresolved in double precision; $\kappa_{\rm ret}$ and $\varepsilon_M$ concern the retained quadrature-normalized space.}
\label{tab:diagnostics}
\tableformat
\begin{tabular}{@{}cclccc@{}}
\toprule
$\eps$ & Dictionary & $\bar r$ & raw $\mathbf M_q$ & med. $\kappa_{\rm ret}$ & max $\varepsilon_M$\\
\midrule
0.20 & TransNet    & 26.4 & singular (20) & $5.83\times10^{11}$ & $1.83\times10^{-10}$\\
     & GTransNet   & 48.5 & $\sim10^{15}$ (0) & $7.35\times10^{11}$ & $1.53\times10^{-10}$\\
     & Gaussian RF & 23.6 & singular (20) & $2.99\times10^{11}$ & $9.00\times10^{-11}$\\
     & Uniform ELM & 24.6 & singular (20) & $5.40\times10^{11}$ & $2.54\times10^{-10}$\\[2pt]
0.10 & TransNet    & 26.4 & singular (20) & $5.83\times10^{11}$ & $1.83\times10^{-10}$\\
     & GTransNet   & 48.5 & $\sim10^{15}$ (0) & $7.35\times10^{11}$ & $1.53\times10^{-10}$\\
     & Gaussian RF & 21.9 & singular (20) & $3.55\times10^{11}$ & $1.24\times10^{-10}$\\
     & Uniform ELM & 24.6 & singular (20) & $5.40\times10^{11}$ & $2.54\times10^{-10}$\\[2pt]
0.05 & TransNet    & 26.4 & singular (20) & $5.83\times10^{11}$ & $1.83\times10^{-10}$\\
     & GTransNet   & 51.6 & $\sim10^{14}$ (0) & $7.62\times10^{11}$ & $1.16\times10^{-10}$\\
     & Gaussian RF & 23.6 & singular (20) & $2.99\times10^{11}$ & $9.00\times10^{-11}$\\
     & Uniform ELM & 24.6 & singular (20) & $5.40\times10^{11}$ & $2.54\times10^{-10}$\\
\bottomrule
\end{tabular}
\end{table}

GTransNet retains a larger effective dimension after SVD rank selection than the one-layer baselines in this benchmark.  This is consistent with the interpretation that its variance-controlled second layer creates a richer retained span rather than changing the time integrator.  The raw-mass statuses in Table~\ref{tab:diagnostics} describe the untransformed dictionaries; time stepping uses the retained identity-mass system and never solves with these raw matrices.

\subsubsection{Truncation-threshold and quadrature sensitivity}

Table~\ref{tab:tau_diag} varies the truncation threshold on the $\eps=0.10$ oscillatory-coefficient benchmark using ten seeds and the $320$-point assembly rule.  Tighter thresholds retain useful directions and improve the error in this example, while increasing the retained rank, spectral sensitivity, and identity-mass defect.  The default $\tau=10^{-12}$ balances these effects; $\tau=10^{-8}$ discards useful directions and substantially increases the error.

\begin{table}[ht!]
\centering
\caption{Truncation-threshold sensitivity at $\eps=0.10$.  Error entries show the ten-seed mean with the sample standard deviation on the second line.}
\label{tab:tau_diag}
\tableformat
\begin{tabular}{@{}ccccc@{}}
\toprule
$\tau$ & TransNet error & GTransNet error & $\bar r_{\rm G}$ & max $\varepsilon_{M,\rm G}$\\
\midrule
$10^{-8}$  & \meanstd{8.91\times10^{-1}}{3.47\times10^{-1}} & \meanstd{2.17\times10^{-2}}{1.01\times10^{-2}} & 30.6 & $1.11\times10^{-12}$\\
$10^{-10}$ & \meanstd{1.98\times10^{-1}}{1.44\times10^{-1}} & \meanstd{3.01\times10^{-3}}{8.48\times10^{-4}} & 39.5 & $1.66\times10^{-11}$\\
$10^{-12}$ & \meanstd{4.53\times10^{-2}}{4.65\times10^{-2}} & \meanstd{4.47\times10^{-4}}{2.67\times10^{-4}} & 48.4 & $1.22\times10^{-10}$\\
$10^{-14}$ & \meanstd{2.42\times10^{-2}}{3.83\times10^{-2}} & \meanstd{1.09\times10^{-4}}{3.46\times10^{-5}} & 56.0 & $1.51\times10^{-9}$\\
\bottomrule
\end{tabular}
\end{table}

Quadrature sensitivity is assessed with $\tau=10^{-12}$ on the same separate validation grid.  The GTransNet mean error is $9.83\times10^{-4}$ at $Q_a=80$ and $4.47\times10^{-4}$ at $Q_a=120,160,240,$ and $320$ to the displayed precision; the TransNet mean remains $4.53\times10^{-2}$.  The plateau begins by $Q_a=120$, so the selected $Q_a=320$ lies within the resolved regime.

\subsubsection{Temporal convergence of the implicit midpoint scheme on a fixed retained space}

To isolate time discretization from feature approximation, we freeze the retained GTransNet space generated with seed 0, using $N=60$, $N_1=240$, $\gamma=20$, $\tau=10^{-12}$, and $320$ Gauss--Legendre assembly points.  In the autonomous test, the initial profile is $\sin(8\pi x)$, $\nu=10^{-2}$, and $T=0.25$; a symmetric eigendecomposition gives the exact solution of the fixed semidiscrete coefficient ODE.  In the nonautonomous test, the initial profile is $\sin(4\pi x)$ and
\[
  a(x,t)=10^{-2}\left[1+0.2\sin(2\pi t/0.1)\cos(2\pi x)\right],
  \qquad T=0.1.
\]
An adaptive eighth-order Dormand--Prince Runge--Kutta solution (DOP853) supplies the coefficient reference, with relative and absolute tolerances $10^{-12}$ and $10^{-14}$.  Tightening the tolerances to $10^{-13}$ and $10^{-15}$ and imposing maximum step $10^{-4}$ changes the final coefficient vector by $9.13\times10^{-14}$ relatively, more than six orders of magnitude below the smallest error in Table~\ref{tab:temporal_convergence}.  Because the time-dependent matrices generally do not commute, the second test is genuinely nonautonomous.

\begin{table}[ht!]
\centering
\caption{Coefficient-level temporal convergence on one fixed retained GTransNet space.  The Euclidean coefficient norm equals the assembly-quadrature $L^2$ norm in the normalized basis.}
\label{tab:temporal_convergence}
\tableformat
\begin{tabular}{@{}ccccc@{}}
\toprule
$\Delta t$ & autonomous error & EOC & nonautonomous error & EOC\\
\midrule
$10^{-2}$       & $5.25\times10^{-4}$ & --    & $1.23\times10^{-5}$ & --\\
$5\times10^{-3}$   & $1.31\times10^{-4}$ & 2.000 & $3.04\times10^{-6}$ & 2.012\\
$2.5\times10^{-3}$ & $3.28\times10^{-5}$ & 2.000 & $7.59\times10^{-7}$ & 2.003\\
$1.25\times10^{-3}$& $8.20\times10^{-6}$ & 2.000 & $1.90\times10^{-7}$ & 2.001\\
\bottomrule
\end{tabular}
\end{table}

The observed EOCs are approximately two and agree with the second-order accuracy of the implicit midpoint update on the fixed retained space.

\subsection{Causal coefficient evolution versus global space--time regression}

We next isolate the role of causal coefficient evolution by comparing it with a global regression that treats time as another input coordinate.  For the heat equation
\begin{equation}
  u_t=10^{-2}u_{xx},\qquad u(0,t)=u(1,t)=0,
  \qquad u(x,0)=\sin(k\pi x),
\end{equation}
consider the global ansatz
\begin{equation}
  u_{\rm st}(x,t)=u_0(x)+tD(x)\vpsi(x,t/T)^\top\valpha.
\end{equation}
It satisfies the initial and boundary conditions exactly but does not enforce causal time marching or parabolic energy decay.  The residual is sampled without additional row weights at $68$ uniformly spaced interior points and $69$ positive time levels, giving $\mathbf A_{\rm st}\in\mathbb R^{4692\times60}$.  The regression minimizes $\|\mathbf A_{\rm st}\valpha-\vb_{\rm st}\|_2^2+\rho\|\valpha\|_2^2$, where $\rho=10^{-10}\operatorname{tr}(\mathbf A_{\rm st}^\top\mathbf A_{\rm st})/N$.  We solve the equivalent column-scaled augmented least-squares problem by an SVD-based solver; the largest observed condition estimate of the augmented scaled matrix is below $500$.

Both formulations use $N=60$, $N_1=240$, $\gamma=8$, seeds $0,1,2$, $T=0.25$, and the same separate $1000$-point final-time validation rule.  The output-evolution calculation uses $320$-point assembly and $\Delta t=0.0025$.  Because the methods employ different trial spaces and residual weights, Table~\ref{tab:global_diagnostic} is a diagnostic comparison of these specified formulations rather than a cost-matched ranking.

\begin{table}[ht!]
\centering
\caption{Global space--time regression and causal output evolution for dissipative heat modes.  Entries are three-seed final relative errors; the second line is the sample standard deviation.}
\label{tab:global_diagnostic}
\tableformat
\begin{tabular}{@{}ccc@{}}
\toprule
$k$ & global space--time regression & causal output evolution\\
\midrule
8  & \meanstd{2.34\times10^{-1}}{2.95\times10^{-1}} & \meanstd{5.14\times10^{-5}}{1.61\times10^{-5}}\\
12 & \meanstd{3.30\times10^{1}}{2.88\times10^{-1}} & \meanstd{7.40\times10^{-4}}{1.69\times10^{-4}}\\
16 & \meanstd{5.54\times10^{2}}{2.57} & \meanstd{9.60\times10^{-3}}{3.13\times10^{-3}}\\
\bottomrule
\end{tabular}
\end{table}

Under these settings, the global ansatz retains large final errors even with the well-conditioned least-squares solve, whereas causal output evolution remains accurate at the same nominal feature dimensions.

\subsection{Nonautonomous and multidimensional benchmarks}

\subsubsection{Two-dimensional fixed and time-dependent diffusion operators}

We first consider the equation
\begin{equation}
  u_t-\nabla\cdot(a_\eps(x,y)\nabla u)=f(x,y,t),\qquad (x,y)\in(0,1)^2,
\end{equation}
where
\begin{equation}
  a_\eps(x,y)=1+\frac14\cos\left(\frac{2\pi x}{\eps}\right)
  \cos\left(\frac{2\pi y}{\eps}\right).
  \label{eq:2d_fixed_coeff}
\end{equation}
The exact solution is $u(x,y,t)=e^{-t}\sin(6\pi x)\sin(6\pi y)$.  The companion nonautonomous problem replaces the coefficient by
\begin{equation}
  a_\eps(x,y,t)=1+\frac14\left(1+\frac12\sin\left(\frac{2\pi t}{T}\right)\right)
  \cos\left(\frac{2\pi x}{\eps}\right)
  \cos\left(\frac{2\pi y}{\eps}\right).
  \label{eq:2d_time_coeff}
\end{equation}
The exact solution is unchanged.  The time-dependent stiffness matrix is reassembled at every implicit-midpoint step.  Both problems use the common configuration in Table~\ref{tab:experiment_overview}.

\begin{table}[ht!]
\centering
\caption{Two-dimensional fixed and time-dependent diffusion operators at $T=0.06$.  Entries are ten-seed final relative $L^2$ errors; the second line is the sample standard deviation.}
\label{tab:2d_operator_comparison}
\tableformat
\begin{tabular}{@{}cccccc@{}}
\toprule
$\eps$ & operator & TransNet & GTransNet & Gaussian RF & gain T/G\\
\midrule
0.20 & fixed & \meanstd{7.96\times10^{-2}}{2.61\times10^{-2}} & \meanstd{2.05\times10^{-2}}{3.36\times10^{-3}} & \meanstd{2.54\times10^{-1}}{3.29\times10^{-2}} & $3.9$\\
     & time dep. & \meanstd{7.94\times10^{-2}}{2.60\times10^{-2}} & \meanstd{2.05\times10^{-2}}{3.36\times10^{-3}} & \meanstd{2.54\times10^{-1}}{3.28\times10^{-2}} & $3.9$\\
0.10 & fixed & \meanstd{7.90\times10^{-2}}{2.59\times10^{-2}} & \meanstd{2.04\times10^{-2}}{3.35\times10^{-3}} & \meanstd{2.53\times10^{-1}}{3.27\times10^{-2}} & $3.9$\\
     & time dep. & \meanstd{7.90\times10^{-2}}{2.59\times10^{-2}} & \meanstd{2.04\times10^{-2}}{3.35\times10^{-3}} & \meanstd{2.53\times10^{-1}}{3.27\times10^{-2}} & $3.9$\\
\bottomrule
\end{tabular}
\end{table}

The sinusoidal modulation has zero temporal mean over $[0,T]$, and both benchmarks use the same exact spatial profile.  Thus the comparison isolates midpoint reassembly of $\widetilde{\mathbf K}_q(t)$ rather than introducing a new representation challenge.  The relative change in mean error is at most $0.31\%$, and the error ordering at matched nominal $N$ is unchanged.

\subsubsection{A two-dimensional nonseparable space--time benchmark}

To move beyond solutions of the separable form $u(\bx,t)=c(t)q(\bx)$, we prescribe
\begin{equation}
  u(x,y,t)=\sum_{m=1}^3 A_m e^{-\lambda_m t}
  \sin(k_{x,m}\pi x)\sin(k_{y,m}\pi y),
\end{equation}
with $(k_{x,m},k_{y,m},A_m,\lambda_m)=(2,3,1,1),(6,5,0.45,4),(9,7,0.25,7)$.  The diffusion coefficient is \eqref{eq:2d_time_coeff}.  We use the configuration in Table~\ref{tab:experiment_overview}.  Table~\ref{tab:nonsep2d} reports the final errors, and Figure~\ref{fig:nonsep2d_fields} shows a representative final field and error map.

\begin{table}[ht!]
\centering
\caption{Two-dimensional nonseparable benchmark.  Entries are ten-seed final relative $L^2$ errors; the second line is the sample standard deviation.}
\label{tab:nonsep2d}
\tableformat
\begin{tabular}{@{}ccccc@{}}
\toprule
$\eps$ & TransNet & GTransNet & Gaussian RF & gain T/G\\
\midrule
0.20 & \meanstd{4.88\times10^{-2}}{7.04\times10^{-3}} & \meanstd{1.35\times10^{-2}}{2.76\times10^{-3}} & \meanstd{6.97\times10^{-2}}{1.47\times10^{-2}} & $3.6$\\
0.10 & \meanstd{4.81\times10^{-2}}{6.92\times10^{-3}} & \meanstd{1.33\times10^{-2}}{2.65\times10^{-3}} & \meanstd{6.90\times10^{-2}}{1.45\times10^{-2}} & $3.6$\\
\bottomrule
\end{tabular}
\end{table}

\begin{figure}[H]
\centering
\includegraphics[width=\textwidth]{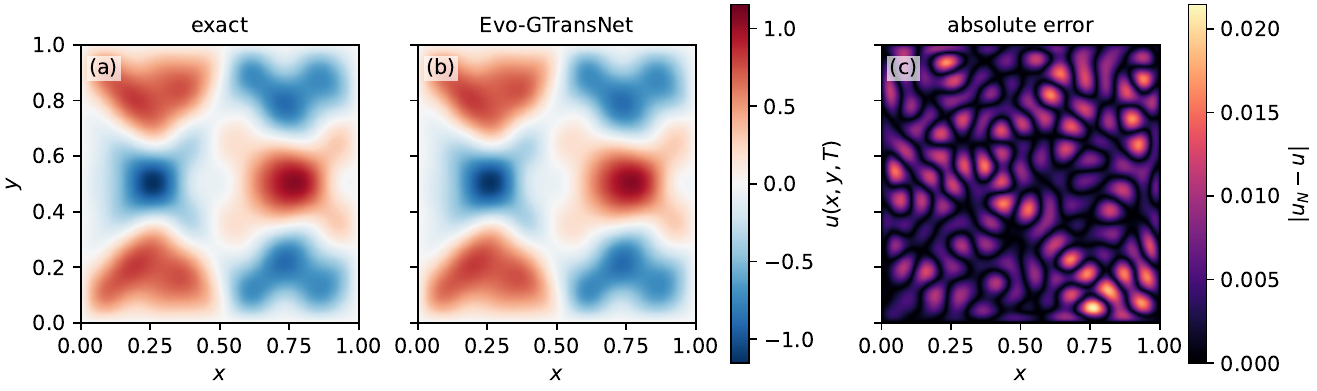}
\caption{Representative final-time field for the two-dimensional nonseparable benchmark at $\eps=0.10$ using the seed-zero GTransNet dictionary.  The exact and numerical panels use a common color scale; the right panel shows the pointwise absolute error.}
\label{fig:nonsep2d_fields}
\end{figure}

Unlike the fixed-profile test, the modal weights now decay at different rates, so the output coefficients follow a multidimensional trajectory rather than a scalar rescaling of one spatial profile.

\subsubsection{A three-dimensional nonseparable parabolic box benchmark}

On $\Omega=(0,1)^3$, let us set
\begin{equation}
  u(x,y,z,t)=\sum_{m=1}^3 A_m e^{-\lambda_m t}
  \sin(k_{x,m}\pi x)\sin(k_{y,m}\pi y)\sin(k_{z,m}\pi z),
\end{equation}
with mode parameters
\[
\begin{array}{c|ccccc}
m & k_{x,m} & k_{y,m} & k_{z,m} & A_m & \lambda_m\\
\hline
1 & 1 & 2 & 2 & 1 & 1\\
2 & 3 & 4 & 2 & 0.45 & 3\\
3 & 5 & 3 & 4 & 0.25 & 5
\end{array}
\]
and
\begin{equation}
  a_\eps(x,y,z,t)=1+0.16\left(1+\frac12\sin\left(\frac{2\pi t}{T}\right)\right)
  \prod_{\xi\in\{x,y,z\}}\cos\left(\frac{2\pi \xi}{\eps}\right).
\end{equation}
We use the configuration in Table~\ref{tab:experiment_overview}; Table~\ref{tab:nonsep3d} reports the results.

\begin{table}[ht!]
\centering
\caption{Three-dimensional nonseparable parabolic box benchmark.  Entries are five-seed final relative $L^2$ errors; the second line is the sample standard deviation.}
\label{tab:nonsep3d}
\tableformat
\begin{tabular}{@{}ccccc@{}}
\toprule
$\eps$ & TransNet & GTransNet & Gaussian RF & gain T/G\\
\midrule
0.25  & \meanstd{5.02\times10^{-2}}{7.10\times10^{-3}} & \meanstd{2.74\times10^{-2}}{2.85\times10^{-3}} & \meanstd{7.64\times10^{-2}}{4.98\times10^{-3}} & $1.8$\\
0.125 & \meanstd{5.02\times10^{-2}}{7.10\times10^{-3}} & \meanstd{2.74\times10^{-2}}{2.85\times10^{-3}} & \meanstd{7.64\times10^{-2}}{4.98\times10^{-3}} & $1.8$\\
\bottomrule
\end{tabular}
\end{table}

At the stated hyperparameters and matched nominal $N=320$, GTransNet reduces the mean final error by factors of approximately $1.8$ and $2.8$ relative to TransNet and Gaussian RF, respectively.  The nearly identical rows indicate that spatial approximation of the evolving modal content dominates the final error over the tested parameter range.

\subsection{Nonlinear extension: A two-dimensional Allen--Cahn equation}

The preceding analysis concerns linear parabolic equations.  To test the retained spatial discretization in a nonlinear setting, we combine it with a standard stabilized dissipative step for the unforced Allen--Cahn equation \cite{allen1979ac} defined as
\begin{equation}
  u_t=\varepsilon_{\rm ac}^2\Delta u+u-u^3,\qquad
  u|_{\partial(0,1)^2}=0,
  \label{eq:ac_test}
\end{equation}
where $\varepsilon_{\rm ac}=0.025$ and
\begin{align}
  u_0(x,y)=&
  0.42\sin(6\pi x)\sin(6\pi y)
  +0.24\sin(4\pi x)\sin(7\pi y) \\
  &+0.16\sin(9\pi x)\sin(3\pi y).
\end{align}
The stabilized semi-implicit discretization \cite{shen2010allen} is
\begin{equation}
  \frac{u_N^{n+1}-u_N^n}{\Delta t}
  =\varepsilon_{\rm ac}^2\Delta u_N^{n+1}
  +u_N^n-(u_N^n)^3-S(u_N^{n+1}-u_N^n),
  \label{eq:ac_scheme}
\end{equation}
where $S\ge0$.  In quadrature-mass-orthonormal coordinates,
\begin{equation}
  \left((1+S\Delta t)\mathbf I_r
  +\Delta t\,\varepsilon_{\rm ac}^2\widetilde{\mathbf K}_q\right)\valpha^{n+1}
  =(1+S\Delta t)\valpha^n
  +\Delta t\,\boldsymbol g(\valpha^n),
\end{equation}
with
\begin{equation}
  \bigl(\boldsymbol g(\valpha^n)\bigr)_i
  =\sum_{q=1}^Qw_q
   \left[u_{N,r}^n(\bx_q)-\bigl(u_{N,r}^n(\bx_q)\bigr)^3\right]
   (\widetilde{\bm\Phi})_{qi}.
  \label{eq:ac_g_quad}
\end{equation}
Here $\widetilde{\mathbf K}_q$ denotes the retained quadrature-assembled stiffness matrix corresponding to $a\equiv1$. 

The feature solution uses $S=2$ and the configuration in Table~\ref{tab:experiment_overview}.  Its assembly-quadrature energy is
\begin{equation}
  E_q(u)=\sum_qw_q\left[\frac{\varepsilon_{\rm ac}^2}{2}|\nabla u(\bx_q)|^2
  +\frac14(u(\bx_q)^2-1)^2\right].
\end{equation}
The reference solution is computed on a $160\times160$ interior finite-difference grid using the same stabilized step, with each linear solve diagonalized by the discrete sine transform.  A grid- and time-step-refinement study gives final relative differences $1.41\times10^{-3}$ between the $160^2$ and $320^2$ grids at fixed step, $7.54\times10^{-4}$ when the step is halved on the $320^2$ grid, and $2.05\times10^{-3}$ between the reference and the fully refined $320^2$, $\Delta t=0.001$ solution.  These differences lie below the feature errors.  Final reference errors are evaluated on the separate $60\times60$ validation rule after bilinear interpolation of the zero-boundary finite-difference field.

\begin{table}[ht!]
\centering
\caption{Two-dimensional Allen--Cahn test.  Error entries show five-seed means with sample standard deviations on the second line; rank and energy drop are seed means.  Negative $\max_n\Delta E_q^n$ indicates monotone observed decay.}
\label{tab:allen_cahn}
\tableformat
\begin{tabular}{@{}cccccc@{}}
\toprule
Dictionary & final error & projection error & $\bar r$ & energy drop & $\max_n\Delta E_q^n$\\
\midrule
TransNet & \meanstd{1.27\times10^{-1}}{3.57\times10^{-2}} & \meanstd{1.08\times10^{-1}}{4.39\times10^{-2}} & 99.8 & 4.34\% & $-1.92\times10^{-5}$\\
GTransNet & \meanstd{5.61\times10^{-2}}{3.51\times10^{-3}} & \meanstd{2.82\times10^{-2}}{4.60\times10^{-3}} & 150 & 4.53\% & $-2.00\times10^{-5}$\\
Gaussian RF & \meanstd{2.85\times10^{-1}}{5.47\times10^{-2}} & \meanstd{2.77\times10^{-1}}{5.29\times10^{-2}} & 98.8 & 3.91\% & $-1.65\times10^{-5}$\\
\bottomrule
\end{tabular}
\end{table}

\begin{figure}[H]
\centering
\includegraphics[width=0.62\textwidth]{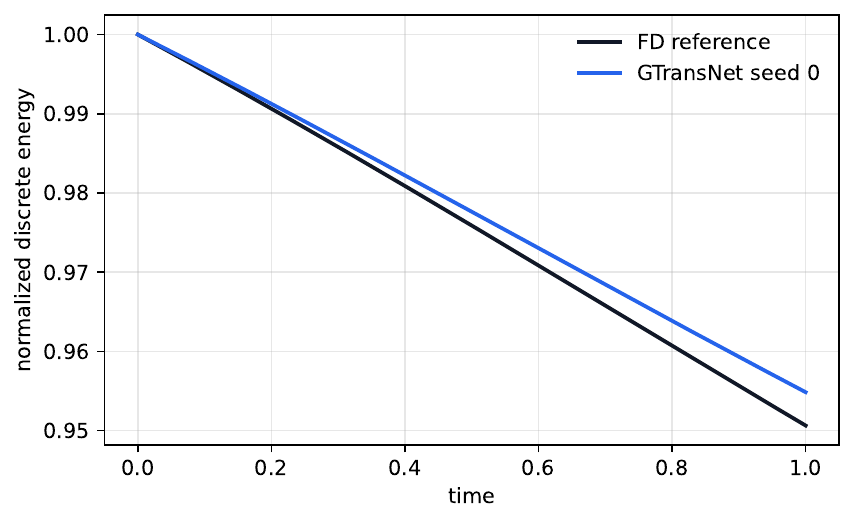}
\caption{Normalized discrete energy histories for the finite-difference reference and one representative GTransNet run.  Both decay monotonically under the stabilized semi-implicit step.}
\label{fig:allen_cahn}
\end{figure}

The retained-space discretization is compatible with the stabilized nonlinear step: all tested dictionaries show monotone observed energy decay.  At the stated hyperparameters and matched nominal $N=150$, GTransNet has the smallest mean initial-projection and final-reference errors.

\section{Conclusions}\label{sec:conclusion}

In this paper, we have developed Evo-GTransNet for parabolic equations. It is a Galerkin method of lines that evolves the coefficients in a fixed spatial trial space supplied by a prescribed GTransNet dictionary. A quadrature-weighted truncated SVD selects the numerically resolved subspace before time integration. The selected basis is then rescaled so that its mass matrix is the identity with respect to the assembly quadrature. The two operations play different roles: truncation determines the approximation space, whereas rescaling alone leaves the retained discrete function unchanged in exact arithmetic.

For symmetric linear parabolic problems, the semidiscrete formulation satisfies the Galerkin energy law, and the implicit midpoint discretization in time is contractive in the assembly-quadrature norm. The conditional fully discrete estimate relates the error to the retained-space approximation error and to the defects arising from spatial assembly, initialization, and temporal consistency. The numerical experiments recover second-order temporal convergence for both autonomous and nonautonomous problems and quantify the behavior of the retained space when the raw feature mass matrices are severely ill-conditioned. For the high-frequency and multiscale benchmarks considered here, GTransNet gives the smallest mean errors among the tested dictionaries at the stated hyperparameter settings and matched nominal output dimension. The multidimensional tests and the observed energy decay in the stabilized Allen--Cahn experiment extend the numerical study beyond one-dimensional linear problems.

Once a dictionary has been sampled, the time integration evolves only the retained output coefficients and cannot recover spatial directions absent from the trial space. The numerical results therefore reflect the combined effects of the trial-space approximation, the retained rank, the sampling variability, and the feature construction. Since GTransNet also introduces the auxiliary width $N_1$, the comparisons at matched nominal output dimension evaluate spatial approximation rather than overall computational efficiency.
The present study is limited to fixed global dictionaries and moderate-size parabolic benchmarks; the multidimensional examples do not address large-scale computational complexity. Future work includes probabilistic approximation estimates, adaptive or localized feature spaces, and scalable rank-revealing algorithms. Extending the analysis to complex geometries, heterogeneous media, and broader classes of nonlinear or nonsymmetric evolution equations is another natural direction.

\appendix

\section{Feature-derivative formulas}\label{app:feature_derivatives}

This appendix records the feature-derivative formulas used in the numerical implementation.
Let $\sigma(z)=\tanh(z)$. For any scalar preactivation $z=z(\bx)$ and $\psi=\sigma(z)$, the chain rule gives
\begin{align}
  \partial_{x_\ell}\psi
  &=\sigma'(z)\,\partial_{x_\ell}z,\\
  \partial_{x_\ell x_m}^2\psi
  &=\sigma''(z)(\partial_{x_\ell}z)(\partial_{x_m}z)
    +\sigma'(z)\,\partial_{x_\ell x_m}^2z,
  \label{eq:tanh_chain_compact}
\end{align}
where
 $ \sigma'(z)=1-\sigma(z)^2$ and
  $\sigma''(z)=-2\sigma(z)\bigl(1-\sigma(z)^2\bigr)$.
For a one-layer feature, $z_j=\gamma_j(\ba_j^\top(\bx-\bx_c)+R\xi_j)$ is affine, and therefore $\partial_{x_\ell x_m}^2z_j=0$. For a deeper GTransNet layer,
\begin{equation}
  \psi_i^{[k]}=\sigma(z_i^{[k]}),
  \qquad
  z_i^{[k]}=\sum_j(\mathbf W^{[k]})_{ij}\psi_j^{[k-1]},
  \qquad k\ge2,
\end{equation}
the first and second derivatives are propagated recursively by linearity and \eqref{eq:tanh_chain_compact}.

If the hard-boundary basis function is $\varphi=D\psi$, then
\begin{align}
  \partial_{x_\ell}\varphi
  &=(\partial_{x_\ell}D)\psi+D\partial_{x_\ell}\psi,\\
  \partial_{x_\ell x_m}^2\varphi
  &=(\partial_{x_\ell x_m}^2D)\psi
    +(\partial_{x_\ell}D)(\partial_{x_m}\psi)
    +(\partial_{x_m}D)(\partial_{x_\ell}\psi)
    +D\partial_{x_\ell x_m}^2\psi.
\end{align}
The Galerkin assembly uses only the first spatial derivatives. The second derivatives are used for the global space--time strong-residual diagnostic and for independent derivative verification.

\section{Numerical assembly and reproducibility details}\label{app:assembly}
\subsection{Shape-parameter candidate sets}\label{app:gamma_candidates}

For all four dictionaries, $\gamma$ is the common multiplier in the argument of the first-layer $\tanh$ features; for GTransNet, it is the scalar in $\mathbf\Gamma=\gamma\mathbf I_{N_1}$. Thus, the numbers below are candidate values of the shape parameter $\gamma$, not feature counts, layer widths, quadrature sizes, or random seeds. We write $\mathcal G_m^b$ for the fixed candidate set associated with dictionary $m\in\{\mathrm T,\mathrm G,\mathrm{RF},\mathrm{ELM}\}$ and benchmark family $b\in\{\mathrm{osc},\mathrm{tar}\}$. The subscripts denote TransNet, GTransNet, Gaussian RF, and Uniform ELM, while the superscripts denote the oscillatory-coefficient and $\eps$-dependent-target benchmarks, respectively. The candidate sets are as follows. For the oscillatory-coefficient benchmark,
\begin{align*}
\mathcal G_{\rm T}^{\rm osc}&=\{4,6,8,10,12,14,16,18\},\\
\mathcal G_{\rm G}^{\rm osc}&=\{6,8,10,12,14,16,18,20,22,24,26,28,32\},\\
\mathcal G_{\rm RF}^{\rm osc}&=\{2,4,6,8,10,12,14\},\\
\mathcal G_{\rm ELM}^{\rm osc}&=\{2,4,6,8,10,12,14,16,18,20,24,28\}.
\end{align*}
For the $\eps$-dependent exact-solution benchmark,
\begin{align*}
\mathcal G_{\rm T}^{\rm tar}&=\{10,14,18,22,26,30,34,38,42,48\},\\
\mathcal G_{\rm G}^{\rm tar}&=\{12,18,24,30,36,42,48,54,60\},\\
\mathcal G_{\rm RF}^{\rm tar}&=\{8,12,16,20,24,30,36,42,48\},\\
\mathcal G_{\rm ELM}^{\rm tar}&=\{8,12,16,20,24,30,36,42,48,54\}.
\end{align*}

Let $\mathcal S_p=\{100,101,102,103,104\}$ be the pilot-seed set, and let $E_{m,\eps}^{(s)}(\gamma)$ denote the final relative $L^2$ error in \eqref{eq:validation_error}, evaluated on the separate validation grid for dictionary $m$, seed $s$, and parameter $\eps$. For each of the two one-dimensional benchmark families, each dictionary, and each reported value of $\eps$, we compute
\[
  \overline E_{m,\eps}(\gamma)
  :=\frac{1}{|\mathcal S_p|}\sum_{s\in\mathcal S_p}
       E_{m,\eps}^{(s)}(\gamma),
  \qquad
  \gamma_{m,\eps}^{*}
  :=\min\!\left(
      \operatorname*{arg\,min}_{\gamma\in\mathcal G_m^b}
      \overline E_{m,\eps}(\gamma)
    \right),
\]
where $b=\mathrm{osc}$ or $\mathrm{tar}$ according to the benchmark family. The outer minimum applies the stated tie rule by choosing the smaller $\gamma$ when the pilot means agree. The selected $\gamma_{m,\eps}^{*}$ is then fixed for the disjoint evaluation seeds $0$--$19$; no evaluation seed is used for parameter selection. Table~\ref{tab:gamma_selected} reports the selected values together with the prescribed values used in the remaining experiments.

\begin{table}[ht!]
\centering
\caption{Shape parameters used in the reported experiments. Each entry is the value of $\gamma$ for the corresponding dictionary. The one-dimensional benchmark values are selected on pilot seeds $100$--$104$; diagnostic and higher-dimensional values are prescribed and shared across the reported seeds and values of $\eps$, where applicable. A dash denotes an unused dictionary.}
\label{tab:gamma_selected}
\tableformat
\begin{tabular}{@{}lccccc@{}}
\toprule
Benchmark & $\eps$ & $\gamma_{\rm T}$ & $\gamma_{\rm G}$ & $\gamma_{\rm RF}$ & $\gamma_{\rm ELM}$\\
\midrule
1D global space--time diagnostic & -- & -- & 8 & -- & --\\
1D oscillatory coefficient & 0.20 & 10 & 20 & 10 & 20\\
                            & 0.10 & 10 & 20 & 8  & 20\\
                            & 0.05 & 10 & 22 & 10 & 20\\
1D $\eps$-dependent target & 0.20 & 14 & 24 & 12 & 12\\
                            & 0.10 & 18 & 30 & 16 & 24\\
                            & 0.05 & 30 & 48 & 36 & 42\\
Implicit-midpoint convergence & -- & -- & 20 & -- & --\\
\midrule
2D fixed/time-dependent operator & -- & 4 & 6 & 4 & --\\
2D nonseparable evolution        & -- & 5 & 8 & 5 & --\\
3D nonseparable evolution        & -- & 3.5 & 3.5 & 2.5 & --\\
2D Allen--Cahn                   & -- & 4 & 6 & 4 & --\\
\bottomrule
\end{tabular}
\end{table}

\subsection{Eigenvalue threshold}\label{app:threshold}

The retained rank is determined by the criterion
$\lambda_i>\tau\lambda_{\max}$ for the eigenvalues of
$\mathbf M_q=\bm\Phi^\top\mathbf W_q\bm\Phi$. Equivalently, if
$\mathbf B=\mathbf W_q^{1/2}\bm\Phi$ has singular values $\sigma_i$, then the retained directions satisfy
$\sigma_i>\sqrt{\tau}\,\sigma_1$. The computations use this SVD criterion directly. Unless otherwise stated, the reported experiments use $\tau=10^{-12}$; Table~\ref{tab:tau_diag} reports the sensitivity to this choice. We define the identity-mass defect by
\begin{equation}
  \varepsilon_M
  :=\norm{\widetilde{\bm\Phi}^\top\mathbf W_q
           \widetilde{\bm\Phi}-\mathbf I_r}_2.
\end{equation}
In exact arithmetic, $\varepsilon_M=0$ on the retained SVD subspace. In floating-point arithmetic, the SVD construction is more reliable than explicitly forming eigenvectors of a nearly singular mass matrix. All reported identity-mass defects are below $1.6\times10^{-9}$, including the sensitivity runs with $\tau=10^{-14}$. Thus, the retained bases have identity mass to the reported accuracy.

\subsection{Cross-grid quadrature assessment}\label{app:cross_grid}

The identity-mass relation is exact on the assembly rule used to construct $\mathbf T_r$; by itself, it does not measure quadrature accuracy away from that rule. For the cross-grid assessment, each assembly-selected transform is fixed, and the retained basis and its derivatives are reevaluated on a finer, separate tensor-product rule. We report
\begin{equation}
\varepsilon_M^{\rm cg}
:=\norm{\widetilde{\bm\Phi}_a^\top\mathbf W_a
\widetilde{\bm\Phi}_a-\mathbf I_r}_2,
\qquad
\varepsilon_K^{\rm cg}
:=\frac{\norm{\widetilde{\mathbf K}_a-\widetilde{\mathbf K}_q}_2}{\norm{\widetilde{\mathbf K}_q}_2},
\end{equation}
where the subscript $a$ denotes the separate assessment rule, and $\widetilde{\mathbf K}_a$ is obtained by assembling the stiffness form on that rule using the same fixed retained basis. Across the tested seeds, dictionaries, and coefficient configurations in the two- and three-dimensional tests, the largest observed values are $7.09\times10^{-3}$ for $\varepsilon_M^{\rm cg}$ and $7.01\times10^{-3}$ for $\varepsilon_K^{\rm cg}$.

For each two-dimensional linear benchmark, the configuration with the largest stiffness discrepancy is recomputed using a refined assembly rule. The relative changes in the final error are $4.636\times10^{-5}$ for the fixed-operator test, $4.580\times10^{-5}$ for the time-dependent-operator test, and $1.409\times10^{-5}$ for the nonseparable test. For the seed-zero three-dimensional Gaussian-RF sentinel, changing the assembly rule from $20^3$ to $24^3$ changes the final error by at most $7.913\times10^{-6}$ over the two reported values of $\eps$. These changes are much smaller than the corresponding final errors, indicating that the reported comparisons are not sensitive to the tested quadrature refinements. A uniform quadrature-convergence analysis is not included here.

\subsection{Computational scaling}\label{app:scaling}

Let $Q$ be the number of quadrature points, $d$ the spatial dimension, $N$ the final output dimension before SVD rank selection, $N_1$ the first hidden-layer width of GTransNet, and $r$ the retained rank after truncation. Direct evaluation of a two-layer GTransNet dictionary costs approximately $O(QN_1+QN_1N)$. For a one-layer random-feature dictionary, the $O(QN_1N)$ second-layer cost is absent. After the weighted feature and derivative-value matrices are formed, a thin SVD of $\mathbf W_q^{1/2}\bm\Phi$ costs $O(QN^2)$ when $Q\ge N$.

After rank selection, the evolved system has dimension $r$, and the quadrature-mass orthonormalization gives identity-mass coordinates. Once
$\widetilde{\mathbf G}_\ell\in\mathbb R^{Q\times r}$ is available, a scalar diffusion operator can be assembled as
\begin{equation}
  \widetilde{\mathbf K}_q(t)
  =\sum_{\ell=1}^d
   \widetilde{\mathbf G}_\ell^\top\mathbf W_q
   \mathbf D_{a,q}(t)\widetilde{\mathbf G}_\ell,
\end{equation}
at a cost of $O(dQr^2)$ for dense assembly. For a time-independent operator, the matrix
$\mathbf I_r+(\Delta t/2)\widetilde{\mathbf K}_q$ is factorized once at a cost of $O(r^3)$, and each later time step costs $O(r^2)$ after the right-hand side is assembled. For a time-dependent operator, the stiffness matrix and dense factorization are recomputed at each step, with cost $O(dQr^2+r^3)$ per step. The Allen--Cahn experiment also requires an explicit nonlinear projection. Its main cost is the evaluation at the quadrature points and projection onto the retained basis, approximately $O(Qr)$. These estimates are for dense algebra at the moderate retained ranks used in this paper. Sparse and iterative implementations are not considered here.

\section*{Declarations}

\subsection*{Funding}
L.~J. was supported in part by the U.S. Department of Energy under grant No.~DE-SC0022254.  J.~Z. was supported in part by the Beijing Natural Science Foundation (No.~JR25003).

\subsection*{Competing interests}
The authors declare that they have no known competing financial interests or personal relationships that could have appeared to influence the work reported in this paper.

\subsection*{Data and code availability}
No external data were used.  The code used to generate the reported numerical experiments is provided in the accompanying Supplementary Material; it specifies the parameter settings, random seeds, and quadrature rules used in this study.

\subsection*{Use of generative AI and AI-assisted technologies}
During the preparation of this work, the authors used OpenAI's ChatGPT and Codex to assist with language editing, consistency checks, and manuscript presentation.  The authors reviewed and verified the manuscript in full and take full responsibility for its content.

\bibliographystyle{plainnat}
\bibliography{references}

\end{document}